\newtheorem{theorem}{Theorem}[section]
\newtheorem{definition}[theorem]{Definition}
\newtheorem{corollary}[theorem]{Corollary}
\newtheorem{remark}{Remark}
\newtheorem{lemma}[theorem]{Lemma}
\numberwithin{equation}{section}
\DeclareMathOperator{\supp}{supp}
\DeclareMathOperator{\spn}{span}
\DeclareMathOperator{\dist}{dist}
\DeclareFontFamily{U}{mathx}{\hyphenchar\font45}
\DeclareFontShape{U}{mathx}{m}{n}{
      <5> <6> <7> <8> <9> <10>
      <10.95> <12> <14.4> <17.28> <20.74> <24.88>
      mathx10
      }{}
\DeclareSymbolFont{mathx}{U}{mathx}{m}{n}
\DeclareMathAccent{\widecheck}{0}{mathx}{"71}
\DeclareMathAccent{\wideparen}{0}{mathx}{"75}
\title[Exceptional sets for length of projections]{Exceptional sets for length under restricted families of projections onto lines in $\mathbb{R}^3$}
\author{Terence L.~J.~Harris}
\address{Department of Mathematics\\ University of Wisconsin\\ 480
Lincoln Drive\\ Madison\\ WI\\ 53706\\ USA}
\email{terry.harris@wisc.edu}
\subjclass[2020]{28A78; 28A80}
\keywords{Hausdorff dimension, orthogonal projection}
\begin{document} 
\begin{abstract} It is shown that if $A \subseteq \mathbb{R}^3$ is a Borel set of Hausdorff dimension $\dim A>1$, and if $\rho_{\theta}$ is orthogonal projection to the line spanned by $( \cos \theta, \sin \theta, 1 )$, then $\rho_{\theta}(A)$ has positive length for all $\theta$ outside a set of Hausdorff dimension at most $\frac{3-\dim A}{2}$.   \end{abstract}
\maketitle
\section{Introduction}

Let $I$ be a compact subinterval of $\mathbb{R}$, and let $\gamma: I \to S^2$ be a $C^2$ curve with $\det\left( \gamma, \gamma', \gamma''\right)$ nonvanishing on $I$. Let $\rho_{\theta}$ be orthogonal projection onto the span of $\gamma(\theta)$, given by 
\[ \rho_{\theta}(x) = \langle x, \gamma(\theta) \rangle \gamma(\theta). \]
The model example is when $\gamma(\theta) = \frac{1}{\sqrt{2}} \left( \cos \theta, \sin \theta, 1 \right)$ and $I= [0, 2\pi]$, but everything stated below holds in the general case. The main result of this work is the following theorem (see Subsection~\ref{notation} for notation). 

\begin{theorem} \label{projmeasure} Let $\alpha>1$, and suppose that $\mu$ is a compactly supported Borel measure on $\mathbb{R}^3$ with $c_{\alpha}(\mu) < \infty$. Then 
\[ \dim\left\{ \theta \in I : \rho_{\theta\sharp} \mu \not\ll \mathcal{H}^1 \right\} \leq \frac{3-\alpha}{2}. \] \end{theorem}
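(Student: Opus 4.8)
The plan is to run the standard Frostman/Fourier reduction for projection theorems and then to prove a weighted Fourier restriction estimate for the light cone, exploiting its curvature.

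\textbf{Reduction.} Suppose toward a contradiction that $E:=\{\theta\in I:\rho_{\theta\sharp}\mu\not\ll\mathcal H^1\}$ has $\dim E>\tfrac{3-\alpha}{2}$, and fix $s$ with $\tfrac{3-\alpha}{2}<s<\dim E$; by Frostman's lemma there is a Borel probability measure $\nu$ on $E$ with $\nu(B(\theta,r))\lesssim r^{s}$ for all $\theta,r$. (Here $s\le1$, and by the usual decomposition of $\mu$ into pieces that are $\beta$-Frostman for $\beta<\alpha$ we may assume $\mu$ itself is $\alpha$-Frostman, letting $\beta\uparrow\alpha$ at the end.) Identifying $\spn\gamma(\theta)$ with $\mathbb R$ via $t\mapsto t\gamma(\theta)$, the measure $\rho_{\theta\sharp}\mu$ becomes $\mu_\theta:=(x\mapsto\langle x,\gamma(\theta)\rangle)_\sharp\mu$, with $\widehat{\mu_\theta}(r)=\widehat\mu(r\gamma(\theta))$; by Plancherel, $\mu_\theta$ has an $L^2$ density (hence is absolutely continuous) whenever $\int_{\mathbb R}|\widehat\mu(r\gamma(\theta))|^2\,dr<\infty$. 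Since $|\widehat\mu|\le\|\mu\|$ controls $|r|\lesssim1$, it therefore suffices to prove
\[ \int_I\int_{\mathbb R}|\widehat\mu(r\gamma(\theta))|^2\,dr\,d\nu(\theta)<\infty, \]
as this forces $\rho_{\theta\sharp}\mu\ll\mathcal H^1$ for $\nu$-a.e.\ $\theta$, contradicting $\supp\nu\subseteq E$. (If the $L^2$-density conclusion is not available in some subrange of $\alpha$, I would substitute a weaker absolute-continuity criterion there, but I expect the $L^2$ bound to suffice throughout.)

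\textbf{The key estimate.} Decomposing the $r$-integral dyadically, it is enough to have summability over dyadic $R\ge1$ of $\int_I\int_{R\le r\le2R}|\widehat\mu(r\gamma(\theta))|^2\,dr\,d\nu(\theta)$, and this follows from the \emph{weighted restriction estimate}
\[ \int_I\int_{R\le r\le2R}|\widehat\mu(r\gamma(\theta))|^2\,dr\,d\nu(\theta)\ \lesssim_{\varepsilon}\ R^{\,3-\alpha-2s+\varepsilon}\,c_\alpha(\mu)\qquad(R\ge1,\ \varepsilon>0), \]
the implied constant also depending on $\gamma,I,s,\nu$ and $\mathrm{diam}(\supp\mu)$: since $s>\tfrac{3-\alpha}{2}$ forces $3-\alpha-2s<0$, a small $\varepsilon$ makes the series geometric. (The exponent is pinned by requiring the case $\nu$ Lebesgue, $s=1$ to reproduce the a.e.\ positive-length threshold $\alpha>1$.) This estimate is the whole content of the theorem.

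\textbf{Proof of the key estimate, and the main obstacle.} On $R\le r\le2R$ one replaces $\widehat\mu$ by $\widehat f:=\widehat\mu\cdot\chi(\cdot/R)$, a smooth frequency cutoff to $|\xi|\sim R$; then $f$ is a mollification of $\mu$ at scale $R^{-1}$, essentially supported in an $O(1)$-ball, with $\|\widehat f\|_{L^2}^2\lesssim\int_{|\xi|\sim R}|\widehat\mu|^2\lesssim R^{\alpha}c_\alpha(\mu)$ (the Fourier formula for Riesz energy) and, via the Frostman bound $\mu(B(x,\rho))\lesssim\rho^\alpha$, $\|f\|_{L^\infty}\lesssim R^{3-\alpha}$ and hence good $L^p(\mathbb R^3)$ bounds for $p\ge2$. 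One then partitions $I$ into $\sim R^{1/2}$ arcs on which the truncated cone $\{r\gamma(\theta):r\sim R\}$ is flat, contained in $1\times R^{1/2}\times R$ planks of bounded overlap; decomposing $f=\sum_J f_J$ along these planks and applying $\ell^2$-decoupling for the cone in $\mathbb R^3$ (Bourgain--Demeter; or, classically, the $L^4$ cone square-function estimate of C\'ordoba / Mockenhaupt--Seeger--Sogge together with the $L^2\!\to\!L^4$ cone Strichartz inequality) reduces matters, up to $R^{\varepsilon}$, to single planks, which one rescales anisotropically to unit scale (the cone piece becoming a flat unit cap and $\nu$ on the arc becoming an $s$-Frostman measure of mass $\lesssim R^{-s/2}$) and finishes by Plancherel plus the Frostman properties of $\mu$ and $\nu$. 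I expect the genuinely hard point to be this passage: decoupling is an $L^p(\mathbb R^3)$ statement about $f$, whereas the quantity to bound is the quadratic form $\int|\widehat f|^2\,d\tau_R$ against the \emph{fractal} measure $\tau_R$ on the cone gotten by integrating arclength on the lines $\mathbb R\gamma(\theta)$ (with $r\sim R$) against $\nu$; transferring between the two without losing the sharp power of $R$ is not a one-step affair. In particular, merely bounding $|\widehat{\tau_R}(z)|\lesssim R^{1-s/2}|z|^{-s/2}$ pointwise and using Hardy--Littlewood--Sobolev in $\int|\widehat f|^2\,d\tau_R=\iint\widehat{\tau_R}(x-y)f(x)\overline{f(y)}\,dx\,dy$ yields the correct exponent only in a subrange of $(\alpha,s)$ and fails, e.g., when $\alpha$ is near $1$; the cancellation in $\widehat{\tau_R}$ (the curvature of the cone) must be used in an $L^p$-orthogonality sense, and quantifying this for the fractal weight $\tau_R$ will, I suspect, require an induction on scales in the spirit of the restricted-projection literature.
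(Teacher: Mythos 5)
Your reduction to the $L^2$-density criterion
\[
\int_I\int_{\mathbb R}|\widehat\mu(r\gamma(\theta))|^2\,dr\,d\nu(\theta)<\infty
\]
is strictly stronger than the theorem, and the ``key estimate'' you isolate,
\[
\int_I\int_{R\le r\le2R}|\widehat\mu(r\gamma(\theta))|^2\,dr\,d\nu(\theta)\lesssim_\varepsilon R^{3-\alpha-2s+\varepsilon},
\]
is in fact \emph{false} for general $s$-Frostman $\nu$ throughout the range $1<\alpha\le 2$. Take $\mu$ to be (normalized) Lebesgue measure on the unit disk in the plane $P=\gamma(\theta_0)^\perp$ for some $\theta_0$ in the interior of $I$; then $c_\alpha(\mu)<\infty$ for every $\alpha\le 2$, and $\widehat\mu$ depends only on $\pi_P\xi$. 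Writing $\delta_\theta:=|\pi_P\gamma(\theta)|\approx|\theta-\theta_0|$, one has $|\widehat\mu(r\gamma(\theta))|\gtrsim 1$ whenever $r\delta_\theta\lesssim 1$, so for every $\theta$ with $|\theta-\theta_0|\lesssim R^{-1}$ the inner integral is $\gtrsim R$. Letting $\nu$ be any $s$-Ahlfors regular measure whose support contains $\theta_0$, the left side is therefore $\gtrsim R\cdot\nu(B(\theta_0,cR^{-1}))\gtrsim R^{1-s}$, which dwarfs $R^{3-\alpha-2s+\varepsilon}$ as soon as $\alpha>1$ and $s$ is near $(3-\alpha)/2$. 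So the weighted restriction estimate you want cannot hold with a constant independent of $R$, and an argument that tries to bound the full $\int_I\int_R^{2R}|\widehat\mu(r\gamma(\theta))|^2\,dr\,d\nu(\theta)$ directly will fail no matter how cleverly one transfers decoupling to the fractal weight $\tau_R$.

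The paper gets around exactly this obstruction: it never proves that $\rho_{\theta\sharp}\mu$ has an $L^2$ density. Instead it decomposes $\mu=\mu_g+\mu_b$ using a small-cap wave packet decomposition of the cone. The ``bad'' part $\mu_b$ is the contribution from those slabs $S$ (preimages of short intervals under $\rho_\theta$) on which $\mu$ concentrates — precisely the obstruction in the example above — and its pushforward is controlled only in $L^1(\mathcal H^1)$, via the bootstrapping/induction-on-$\epsilon$ scheme of Lemma~\ref{energylemma}, which exploits that there can be few such slabs. The ``good'' part $\mu_g$, with that concentration removed, is the part for which an $L^2(\mathcal H^1)$ estimate against $\lambda$ holds; this is proved by switching from small caps to standard cone planks, running a pigeonholing in the number $N$ of planks per slab (the gain in \eqref{goodplank1}), and invoking refined $L^6$ decoupling for the cone. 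This two-norm good/bad structure, which is the paper's main idea, is entirely absent from your proposal, and it is exactly what is needed to replace the unprovable $L^2$ criterion by the correct $L^1$ conclusion.
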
 

The following slightly stronger inequality will be shown via the same method of proof.

\begin{theorem} \label{Lptheorem} If $\alpha >1$, then for any $\beta > \frac{3-\alpha}{2}$, there exists a $p= p(\alpha,\beta, \gamma)$ with $1 < p \leq 2$ and a constant $C = C(\alpha, \beta, \gamma)$ such that the following holds. If $\mu$ is a Borel measure on $B_3(0,1)$ such that $c_{\alpha}(\mu) < \infty$, and if $\lambda$ is a Borel measure on $I$ such that $c_{\beta}(\lambda) < \infty$,  then 
\[ \int_I\left\lVert \rho_{\theta\sharp} \mu \right\rVert_{L^p(\mathcal{H}^1)}^p \, d\lambda(\theta) \leq C c_{\alpha}(\mu)^{p-1} \mu(\mathbb{R}^3) \lambda(\mathbb{R})^{\frac{2-p}{2}} c_{\beta}(\lambda)^{\frac{p}{2}}.  \] \end{theorem}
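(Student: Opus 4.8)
The plan is to reduce, via mollification and a Littlewood--Paley decomposition, to a single-scale estimate, and to prove the latter by interpolating a trivial $L^1$ bound with an $L^2$ estimate in which the non-degeneracy hypothesis $\det(\gamma,\gamma',\gamma'')\neq 0$ enters through (i) a sublevel-set estimate for $\theta\mapsto\langle v,\gamma(\theta)\rangle$ and (ii) the curvature of the cone $\{r\gamma(\theta):r\in\mathbb{R},\ \theta\in I\}$. First, it is enough to bound $\int_I\|\rho_{\theta\sharp}(\mu*\psi_\delta)\|_{L^p(\mathcal{H}^1)}^p\,d\lambda(\theta)$ uniformly for $\delta\in(0,1]$, where $\psi_\delta$ is a fixed smooth approximate identity at scale $\delta$; letting $\delta\to 0$, Fatou's lemma together with a weak-$*$ compactness argument identifies the limit as an $L^p$ density whose norm is controlled by the uniform bound, and Theorem~\ref{projmeasure} then follows from Theorem~\ref{Lptheorem} in the usual way, by applying the latter to a Frostman measure $\lambda$ with $c_\beta(\lambda)<\infty$, $\beta>\tfrac{3-\alpha}{2}$, supported on the purported exceptional set. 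Writing $\mu=\sum_{j\ge 0}\mu^j$ with $\widehat{\mu^j}$ supported in $\{|\xi|\sim 2^j\}$ for $j\ge 1$, the functions $\widehat{\rho_{\theta\sharp}\mu^j}$ are supported in $\{|r|\sim 2^j\}$, so by the Littlewood--Paley inequality on $\mathbb{R}$ and $\ell^p\hookrightarrow\ell^2$ (valid as $p\le 2$) one has $\|\rho_{\theta\sharp}\mu\|_{L^p}^p\lesssim\sum_j\|\rho_{\theta\sharp}\mu^j\|_{L^p}^p$. It thus suffices to prove a single-scale estimate
\[ \int_I\big\|\rho_{\theta\sharp}\mu^j\big\|_{L^p(\mathcal{H}^1)}^p\,d\lambda(\theta)\ \lesssim\ 2^{-\varepsilon_0 j}\,c_\alpha(\mu)^{p-1}\mu(\mathbb{R}^3)\,\lambda(\mathbb{R})^{\frac{2-p}{2}}c_\beta(\lambda)^{\frac{p}{2}}, \]
for some $\varepsilon_0=\varepsilon_0(\alpha,\beta,p,\gamma)>0$, and sum the geometric series in $j$.

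For the single-scale estimate, the inequality $\|g\|_p^p\le\|g\|_1^{2-p}\|g\|_2^{2(p-1)}$ together with $\|\rho_{\theta\sharp}\mu^j\|_{L^1}\lesssim\|\mu^j\|_{L^1}\lesssim\mu(\mathbb{R}^3)$ and Hölder's inequality in $\theta$ (concavity of $s\mapsto s^{p-1}$) reduces matters to controlling $\int_I\|\rho_{\theta\sharp}\mu^j\|_{L^2(\mathcal{H}^1)}^2\,d\lambda(\theta)$, which by Plancherel in the line variable equals $\int_{\mathbb{R}^3}|\widehat\mu(\xi)|^2\,d\kappa_j(\xi)$, where $\kappa_j$ is the push-forward of $\mathbf{1}_{\{|r|\sim 2^j\}}\,dr\,d\lambda(\theta)$ under $(r,\theta)\mapsto r\gamma(\theta)$; this is a measure carried by the part of the cone generated by $\gamma$ with $|\xi|\sim 2^j$. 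Because $\gamma(\theta),\gamma'(\theta),\gamma''(\theta)$ span $\mathbb{R}^3$ at each $\theta$, the map $\theta\mapsto\langle v,\gamma(\theta)\rangle$ is uniformly non-flat of order two, so $\{\theta\in I:|\langle v,\gamma(\theta)\rangle|<t\}$ lies in $O_\gamma(1)$ arcs of length $\lesssim(t/|v|)^{1/2}$, whence $\lambda\{\theta:|\langle v,\gamma(\theta)\rangle|<t\}\lesssim c_\beta(\lambda)(t/|v|)^{\beta/2}$ and in particular $\|\kappa_j*\chi\|_{L^\infty}\lesssim c_\beta(\lambda)\,2^{-j\beta}$ for a fixed unit bump $\chi$. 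The crude bound $\int|\widehat\mu|^2\,d\kappa_j\le\|\kappa_j*\chi\|_{L^\infty}\int_{|\xi|\sim 2^j}|\widehat\mu|^2\lesssim_s c_\beta(\lambda)\,2^{-j\beta}\,2^{j(3-s)}c_\alpha(\mu)\mu(\mathbb{R}^3)$ (using the $s$-energy bound $\int_{|\xi|\sim 2^j}|\widehat\mu|^2\lesssim_s 2^{j(3-s)}c_\alpha(\mu)\mu(\mathbb{R}^3)$ for $s<\alpha$, which follows from $c_\alpha(\mu)<\infty$) yields $\varepsilon_0>0$ only for $\beta>3-\alpha$, which is short of the target by a factor of two. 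The improvement comes from the curvature of the cone, also encoded by $\det(\gamma,\gamma',\gamma'')\neq 0$: the $1$-neighbourhood of the cone at scale $2^j$ is a union of $\sim 2^{j/2}$ curved planks of dimensions $2^j\times 2^{j/2}\times 1$, and a decoupling inequality for this cone (equivalently, a Wolff-type incidence estimate for the associated family of slabs) allows one, after splitting $\mu^j=\sum_P\mu^j_P$ over the planks, to replace the lossy inclusion $\int_{1\text{-nbhd of cone}}|\widehat\mu|^2\le\int_{|\xi|\sim 2^j}|\widehat\mu|^2$ by an estimate gaining essentially the full factor $2^{j/2}$; this is precisely why $p<2$ must be taken (the decoupling is used at an exponent $p'>2$ dual to $p$), with $p$ needing to be closer to $1$ as $\beta\downarrow\tfrac{3-\alpha}{2}$ and $\alpha\downarrow 1$. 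Feeding this back through the interpolation and Jensen's inequality produces the single-scale estimate with some $\varepsilon_0>0$ whenever $\beta>\tfrac{3-\alpha}{2}$.

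Summing over $j$ then gives $\int_I\|\rho_{\theta\sharp}\mu\|_{L^p}^p\,d\lambda\lesssim c_\alpha(\mu)^{p-1}\mu(\mathbb{R}^3)\,\lambda(\mathbb{R})^{2-p}c_\beta(\lambda)^{p-1}$, and this dominates the asserted right-hand side because $c_\beta(\lambda)\gtrsim_\gamma\lambda(\mathbb{R})$ forces $\lambda(\mathbb{R})^{2-p}c_\beta(\lambda)^{p-1}\le\lambda(\mathbb{R})^{\frac{2-p}{2}}c_\beta(\lambda)^{\frac{p}{2}}$. I expect the main difficulty to be the curvature step: one must extract the full gain $2^{j/2}$ from decoupling while still keeping the homogeneity $c_\alpha(\mu)^{p-1}\mu(\mathbb{R}^3)$ in $\mu$ --- it is interpolation against the crude $L^1$ bound, rather than against the $L^2$ bound, that delivers this homogeneity, and reconciling this with a clean use of decoupling, and tracking how $p$ and $\varepsilon_0$ degenerate as $\beta\to\tfrac{3-\alpha}{2}$ or $\alpha\to 1$, is the technical core. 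By comparison the remaining steps --- the mollification limit, the Littlewood--Paley reduction, the sublevel-set estimate for $\gamma$, and summing the geometric series --- are routine.
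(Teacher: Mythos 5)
Your approach differs substantially from the paper's, and the crucial step has a genuine gap.

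The paper does not prove a single-scale estimate of the form $\int_I\lVert \rho_{\theta\sharp}\mu^j\rVert_{L^p}^p\,d\lambda \lesssim 2^{-\varepsilon_0 j}(\cdots)$ for the whole Littlewood--Paley piece $\mu^j$. Instead it performs a good-bad decomposition: at each scale $j$, the slabs $S$ with $\mu(100S)\geq 2^{-j(1-\epsilon)}$ are declared ``bad,'' and the corresponding piece $\mu_b$ is estimated in $L^1$ using a bootstrap (Lemma~\ref{energylemma}, which is proved by induction on the exceptional-set exponent), not by any $L^2$ argument. The remaining ``good'' piece $\mu_g$ is estimated in $L^2$, but that estimate is \emph{not} of the form $\lVert\kappa_j*\chi\rVert_\infty\int_{|\xi|\sim 2^j}|\widehat\mu|^2$ with an extra decoupling gain; it goes through a Cauchy--Schwarz in the measure $\mu$ (see \eqref{mucauchyschwarz}), the good-slab condition \eqref{goodplank1}, a pigeonholing between small caps and standard caps (the $N$ parameter around \eqref{secondpigeon}--\eqref{uncertainty3}), the Frostman condition on $\mu$ in \eqref{muestimate}, and refined $L^6$ decoupling. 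The $L^p$ result (Theorem~\ref{Lptheorem}) is then obtained by interpolating the $L^1$ and $L^2$ bounds for the good part and absorbing the bad part's $L^p$ norm into a geometric series.

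The concrete gap in your proposal is the ``curvature step.'' Two problems. First, the claimed gain $2^{j/2}$ is not the right target. Tracing through your interpolation, the single-scale estimate requires $3-\alpha-\beta-\eta<0$ where $\eta$ is the gain, and to reach all $\beta>\tfrac{3-\alpha}{2}$ you need $\eta\geq\tfrac{3-\alpha}{2}$, which is $>\tfrac12$ for all $\alpha<2$; a fixed $2^{j/2}$ gain only reaches $\beta>\tfrac52-\alpha$, strictly worse than $\tfrac{3-\alpha}{2}$ on the whole range $\alpha\in(1,2)$. Second, and more fundamentally, you have not explained how a decoupling inequality -- which is a statement at an $L^q$ with $q>2$ comparing $\lVert\sum f_P\rVert_q$ to $(\sum\lVert f_P\rVert_q^2)^{1/2}$ -- is supposed to improve the pure $L^2$ quantity $\int|\widehat\mu|^2\,d\kappa_j$ at all. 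Decoupling at $L^2$ is trivial, and your $L^2$ quantity does not become accessible to an $L^q$ inequality ($q>2$) without additional input (in the paper this input is Cauchy--Schwarz in $\mu$ together with the $c_\alpha(\mu)$ bound, which converts an $L^2(d\mu)$ quantity to an $L^6$ quantity on a pruned set $Y$). As stated, the claim that ``a decoupling inequality ... allows one to replace the lossy inclusion by an estimate gaining essentially $2^{j/2}$'' is not an argument, and filling it in is not a routine matter -- it is exactly where the paper's machinery (bad slabs, good planks, the pigeonhole on $N$, and the extra $N^{-1}$ factor in \eqref{goodplank1}) does its work. Without the good-bad split, a single-scale estimate with geometric decay down to the threshold $\beta>\tfrac{3-\alpha}{2}$ is dubious: $\mu$ can concentrate on $2^{-j}$-slabs (neighbourhoods of light planes), and for such $\mu$ the unrestricted quantity $\int_{|\xi|\sim 2^j}|\widehat\mu|^2$ is already near the ``trivial'' bound, so the improvement you envisage is simply not available.

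The remaining parts of your reduction (mollification, Littlewood--Paley, $\ell^p\hookrightarrow\ell^2$, the sublevel-set estimate, and the final comparison of $\lambda(\mathbb{R})^{2-p}c_\beta(\lambda)^{p-1}$ against $\lambda(\mathbb{R})^{\frac{2-p}{2}}c_\beta(\lambda)^{\frac p2}$ using $c_\beta(\lambda)\gtrsim\lambda(\mathbb{R})$) are fine, but they are the scaffolding; the load-bearing step is missing.
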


By a theorem of Mattila~\cite{mattila}, a corollary of Theorem~\ref{Lptheorem} is the following.
\begin{corollary} \label{corollaryintersection} Suppose that $t>1$, and that $A \subseteq \mathbb{R}^3$ is $\mathcal{H}^t$-measurable with $0 < \mathcal{H}^t(A) < \infty$. Then there is a set $E_{\theta} \subseteq I$ with 
\[ \dim E_{\theta} \leq \frac{3-t}{2}, \]
such that for any $\theta \in I \setminus E_{\theta}$, 
\[ \mathcal{H}^1\left\{ y \in \spn \gamma(\theta) : \dim\left(\rho_{\theta}^{-1}(y) \cap A\right) = t-1 \right\}>0. \] \end{corollary}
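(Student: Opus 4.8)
The plan is to derive the statement from Theorem~\ref{Lptheorem} together with Mattila's slicing theorem~\cite{mattila}. Two reductions come first. It suffices to prove the corollary when $A$ is in addition compact and contained in $B_3(0,1)$: cover $\mathbb R^3$ by countably many balls of radius $\tfrac14$, choose one, $B$, with $0<\mathcal H^t(A\cap B)<\infty$, replace $A$ by a compact subset of $A\cap B$ of positive $\mathcal H^t$-measure, and apply a translation and dilation carrying $B$ onto $B_3(0,1)$ (such affine maps send lines to lines and planes to planes and so change none of the dimensions at issue); an exceptional set valid after this reduction is valid for the original $A$, because $\mathcal H^t(A)<\infty$ forces $\dim\big(\rho_\theta^{-1}(y)\cap A\big)\le t-1$ for $\mathcal H^1$-a.e.\ $y$ (Eilenberg's inequality; see below) and a slice of any subset of $A$ is contained in the corresponding slice of $A$. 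So assume $A$ is compact, $A\subseteq B_3(0,1)$, $0<\mathcal H^t(A)<\infty$, and put $E_\theta:=\{\theta\in I:\mathcal H^1\{y\in\spn\gamma(\theta):\dim(\rho_\theta^{-1}(y)\cap A)=t-1\}=0\}$. Second, by the Frostman characterisation of Hausdorff dimension, to get $\dim E_\theta\le\frac{3-t}{2}$ it is enough to show that for every $\beta>\frac{3-t}{2}$ and every Borel measure $\lambda$ on $I$ with $c_\beta(\lambda)<\infty$ one has $\lambda(E_\theta)=0$.

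So fix such $\beta$ and $\lambda$. By Frostman's lemma there is a nonzero Borel measure $\mu$ with $\supp\mu\subseteq A\subseteq B_3(0,1)$ and $\mu(B(x,r))\le C r^{t}$ for all $x$ and $r$, whence $c_\alpha(\mu)<\infty$ for every $\alpha<t$. Since $\tfrac{3-\alpha}{2}\to\tfrac{3-t}{2}$ as $\alpha\uparrow t$, pick $\alpha\in(1,t)$ with $\beta>\tfrac{3-\alpha}{2}$ and apply Theorem~\ref{Lptheorem} to $\mu$, $\lambda$, this $\alpha$, and this $\beta$: the finiteness of $\int_I\lVert\rho_{\theta\sharp}\mu\rVert_{L^p(\mathcal H^1)}^{p}\,d\lambda(\theta)$ forces, for $\lambda$-a.e.\ $\theta$, $\rho_{\theta\sharp}\mu\ll\mathcal H^1$ with density in $L^p(\mathcal H^1)$, for the exponent $p\in(1,2]$ the theorem supplies.

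I claim every such $\theta$ lies outside $E_\theta$, which finishes the proof. For \emph{every} $\theta$, Eilenberg's inequality for the $1$-Lipschitz map $x\mapsto\langle x,\gamma(\theta)\rangle$ gives $\int^{*}\mathcal H^{t-1}\big(\rho_\theta^{-1}(y)\cap A\big)\,d\mathcal H^1(y)\le C\,\mathcal H^t(A)<\infty$, so $\dim\big(\rho_\theta^{-1}(y)\cap A\big)\le t-1$ for $\mathcal H^1$-a.e.\ $y\in\spn\gamma(\theta)$. When in addition $\rho_{\theta\sharp}\mu\ll\mathcal H^1$ with $L^p$ density, $\mu$ disintegrates as $\mu=\int\mu_{\theta,y}\,d(\rho_{\theta\sharp}\mu)(y)$ with $\mu_{\theta,y}$ supported on $\rho_\theta^{-1}(y)\cap\supp\mu$, and Mattila's theorem~\cite{mattila}, whose hypothesis is precisely this absolute continuity, bounds for each $\alpha'<t$ the averaged energy $\int c_{\alpha'-1}(\mu_{\theta,y})\,d(\rho_{\theta\sharp}\mu)(y)$ in terms of $c_{\alpha'}(\mu)<\infty$. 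Taking $\alpha'\uparrow t$ along a sequence and intersecting the resulting full-$(\rho_{\theta\sharp}\mu)$-measure sets of $y$ yields a set of $y$ of positive $(\rho_{\theta\sharp}\mu)$-measure — hence, by absolute continuity, of positive $\mathcal H^1$-measure — on which $c_{\alpha'-1}(\mu_{\theta,y})<\infty$ for all $\alpha'<t$, so that $\dim\big(\rho_\theta^{-1}(y)\cap A\big)\ge\dim\supp\mu_{\theta,y}\ge t-1$ there. Intersecting with the $\mathcal H^1$-conull set from the first estimate leaves a set of $y$ of positive $\mathcal H^1$-measure on which $\dim\big(\rho_\theta^{-1}(y)\cap A\big)=t-1$; hence $\theta\notin E_\theta$. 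Therefore $\lambda(E_\theta)=0$, and letting $\lambda$ and then $\beta$ vary gives $\dim E_\theta\le\frac{3-t}{2}$.

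The step I expect to be the main obstacle is the invocation of Mattila's slicing theorem. One must confirm that the version used accepts exactly the output of Theorem~\ref{Lptheorem} — absolute continuity of $\rho_{\theta\sharp}\mu$ with an $L^p$, rather than necessarily $L^2$, density — and that what passes to the sliced measures is the Frostman exponent $t$ of $\mu$, not the smaller $\alpha$ with $c_\alpha(\mu)<\infty$; the latter is taken care of by the limiting argument over $\alpha'\uparrow t$ above, which uses that all the energies $c_{\alpha'}(\mu)$, $\alpha'<t$, are finite for a single Frostman measure $\mu$. The remaining points — measurability of the disintegration, and reconciling the ``$\lambda$-a.e.\ $\theta$'' conclusion of Theorem~\ref{Lptheorem} with the ``every $\theta$'' upper bound from Eilenberg's inequality — are routine.
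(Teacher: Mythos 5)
Your proposal follows the route the paper intends: reduce to a compact $A\subseteq B_3(0,1)$, take a Frostman measure $\mu$ on $A$, use Theorem~\ref{Lptheorem} plus a Frostman-lemma argument to conclude $\rho_{\theta\sharp}\mu\ll\mathcal H^1$ with $L^p$ density off a set of $\theta$ of dimension at most $\frac{3-t}{2}$, and then feed the absolutely continuous projections into Mattila's slicing theorem~\cite{mattila} for the lower bound $\dim(\rho_\theta^{-1}(y)\cap A)\ge t-1$, while the upper bound $\le t-1$ for $\mathcal H^1$-a.e.\ $y$ comes from the coarea/Eilenberg inequality and $\mathcal H^t(A)<\infty$. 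The paper presents this corollary essentially as an application of Theorem~\ref{Lptheorem} and Mattila's theorem without spelling out the details, so your write-up is a fleshing-out of the same argument rather than a different proof. The reductions (compact subset, translation and dilation, intersecting with the Eilenberg full-measure set to upgrade $\ge t-1$ to $=t-1$, and the limiting argument $\alpha'\uparrow t$) are all sound.

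Two points to tighten, both of which you already flag. First, the exact form of Mattila's theorem: the relevant statement in~\cite{mattila} is a pointwise-in-$\theta$ slicing result whose hypothesis is absolute continuity of the pushforward (with an integrability assumption on the density), so the $L^p$ output of Theorem~\ref{Lptheorem} is indeed what is needed — this is why the paper derives the corollary from Theorem~\ref{Lptheorem} rather than from the weaker absolute-continuity statement in Theorem~\ref{projmeasure}; you should check that the integrability exponent produced by Theorem~\ref{Lptheorem} (some $p\in(1,2]$) is compatible with the hypothesis in~\cite{mattila}. Second, the Frostman reduction ``$\lambda(E_\theta)=0$ for all Frostman $\lambda$ implies $\dim E_\theta\le\frac{3-t}{2}$'' uses the converse half of Frostman's lemma and therefore needs $E_\theta$ to be Borel or analytic; this measurability is a genuine step (the paper points to~\cite{harris24} for the analogous set in Theorem~\ref{projmeasure}) and should be addressed rather than taken for granted. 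Neither of these is a conceptual gap, but both are loose ends in the write-up as it stands.
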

Corollary~\ref{corollaryintersection} is an exceptional set version of (one half of) Theorem~1.4 from \cite{greenharrisou}. By Frostman's lemma and a theorem of Davies~\cite{davies} that a Borel (or analytic) set of dimension strictly greater than $t$ (or even $\mathcal{H}^t(A) = \infty$) contains an $\mathcal{H}^t$-measurable subset of positive finite $\mathcal{H}^t$ measure, a consequence of Corollary~\ref{corollaryintersection} is the following. 
\begin{corollary} \label{corollaryborel} If $A \subseteq \mathbb{R}^3$ is Borel (or analytic) and $\dim A > 1$, then there is a set $E_{\theta} \subseteq I$ with 
\[ \dim E_{\theta} \leq \frac{3-\dim A}{2}, \]
such that for any $\epsilon>0$ and $\theta \in I \setminus E_{\theta}$, 
\begin{equation} \label{fibrelength} \mathcal{H}^1\left\{ y \in \spn \gamma(\theta) : \dim\left(\rho_{\theta}^{-1}(y) \cap A\right) \geq \dim A-1-\epsilon \right\}>0. \end{equation}
If $\mathcal{H}^t(A)>0$ where $\dim A = t$, then \eqref{fibrelength} holds with $\epsilon =0$. 
\end{corollary}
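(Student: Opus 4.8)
The plan is to deduce both statements from Corollary~\ref{corollaryintersection} by approximating $A$ from inside by sets of almost-full dimension on which the hypothesis $0 < \mathcal{H}^{t} < \infty$ can be arranged, the only subtlety being to produce a single exceptional set that works for every $\epsilon$ at once.

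For the main assertion, write $t = \dim A > 1$ and fix a sequence $\epsilon_n \downarrow 0$ with $t - \epsilon_n > 1$ for all $n$. Since $\dim A > t - \epsilon_n$, Davies' theorem gives an $\mathcal{H}^{t-\epsilon_n}$-measurable set $A_n \subseteq A$ with $0 < \mathcal{H}^{t-\epsilon_n}(A_n) < \infty$, and Corollary~\ref{corollaryintersection} applied to $A_n$ with exponent $t - \epsilon_n$ yields $E_n \subseteq I$ with $\dim E_n \leq \frac{3-t+\epsilon_n}{2}$ such that $\mathcal{H}^1\{ y : \dim(\rho_\theta^{-1}(y)\cap A_n) = t-1-\epsilon_n \} > 0$ for every $\theta \in I \setminus E_n$. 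Because $A_n \subseteq A$ one has $\dim(\rho_\theta^{-1}(y)\cap A) \geq \dim(\rho_\theta^{-1}(y)\cap A_n)$, so on that same set of $y$ the left-hand side is $\geq t-1-\epsilon_n$; hence \eqref{fibrelength} holds with $\epsilon$ replaced by $\epsilon_n$ for all $\theta \in I \setminus E_n$.

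I would then set $E_\theta := \bigcup_{m=1}^{\infty} \bigcap_{n \geq m} E_n$. If $\theta \in I \setminus E_\theta$ then $\theta \notin E_n$ for infinitely many $n$; given $\epsilon > 0$, choosing such an $n$ with $\epsilon_n < \epsilon$ and using that $\{ y : \dim(\rho_\theta^{-1}(y)\cap A) \geq s\}$ decreases in $s$ gives \eqref{fibrelength} for that $\epsilon$. For the dimension bound, each $\bigcap_{n\geq m}E_n$ is contained in $E_N$ for all $N \geq m$, so $\dim\bigcap_{n\geq m}E_n \leq \inf_{N\geq m}\frac{3-t+\epsilon_N}{2} = \frac{3-t}{2}$, and countable stability of Hausdorff dimension gives $\dim E_\theta \leq \frac{3-t}{2}$. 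For the final sentence, when $\mathcal{H}^t(A) > 0$ with $t = \dim A$, take $A' = A$ if $\mathcal{H}^t(A) < \infty$ (so that $A'$ is already $\mathcal{H}^t$-measurable) and an $\mathcal{H}^t$-measurable $A' \subseteq A$ with $0 < \mathcal{H}^t(A') < \infty$ from Davies' theorem if $\mathcal{H}^t(A) = \infty$; Corollary~\ref{corollaryintersection} applied to $A'$ with exponent $t$ produces an exceptional set of dimension $\leq \frac{3-t}{2}$ off which $\mathcal{H}^1\{ y : \dim(\rho_\theta^{-1}(y)\cap A) \geq t-1\} > 0$, which is \eqref{fibrelength} with $\epsilon = 0$; adjoining this set to $E_\theta$ leaves the dimension bound intact, and the $\epsilon = 0$ conclusion implies all the $\epsilon > 0$ conclusions.

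The only step needing care is the construction of $E_\theta$: the naive union $\bigcup_n E_n$ would only satisfy $\dim \leq \sup_n \frac{3-t+\epsilon_n}{2} > \frac{3-t}{2}$, whereas $\liminf_n E_n = \bigcup_m \bigcap_{n\geq m} E_n$ works because each inner intersection sits inside $E_N$ for arbitrarily large $N$ and therefore inherits the sharp limiting bound. Everything else — Davies' theorem, the monotonicity of the threshold set in $s$, and the inclusion $\rho_\theta^{-1}(y)\cap A_n \subseteq \rho_\theta^{-1}(y)\cap A$ — is routine bookkeeping.
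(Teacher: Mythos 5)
Your proof is correct, and it follows the route the paper indicates (Davies' theorem to produce $\mathcal{H}^{t-\epsilon_n}$-measurable subsets of positive finite measure, then Corollary~\ref{corollaryintersection} applied to each). The paper does not spell out how to obtain a single exceptional set of dimension exactly $\frac{3-t}{2}$ valid for all $\epsilon>0$; your $\liminf_n E_n$ construction, together with the observation that each $\bigcap_{n\geq m}E_n$ sits inside $E_N$ for arbitrarily large $N$, is a clean and correct way to fill that gap, and the $\epsilon=0$ case is handled appropriately.
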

By taking $\epsilon$ in \eqref{fibrelength} positive but strictly smaller than $\dim A -1$, a consequence of  Corollary~\ref{corollaryborel} is the following. 
\begin{corollary} \label{lengthcorollary} If $A \subseteq \mathbb{R}^3$ is Borel (or analytic) and $\dim A > 1$, then 
\begin{equation} \label{lengthexception} \dim \left\{ \theta \in I :  \mathcal{H}^1( \rho_{\theta}(A) ) =0 \right\} \leq \frac{3-\dim A}{2}. \end{equation}
\end{corollary}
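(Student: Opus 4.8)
The plan is to obtain \eqref{lengthexception} directly from Corollary~\ref{corollaryborel} by specializing $\epsilon$. Since $\dim A > 1$, the interval $(0, \dim A - 1)$ is nonempty, so I can fix any $\epsilon$ with $0 < \epsilon < \dim A - 1$; in particular $\dim A - 1 - \epsilon > 0$. Let $E_\theta \subseteq I$ be the exceptional set furnished by Corollary~\ref{corollaryborel}: it satisfies $\dim E_\theta \le \frac{3 - \dim A}{2}$ and, crucially, does not depend on $\epsilon$, so that \eqref{fibrelength} holds simultaneously for the chosen $\epsilon$ and every $\theta \in I \setminus E_\theta$.

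Next I would fix $\theta \in I \setminus E_\theta$ and set
\[ Y_\theta = \left\{ y \in \spn \gamma(\theta) : \dim\left(\rho_\theta^{-1}(y) \cap A\right) \ge \dim A - 1 - \epsilon \right\}, \]
so that $\mathcal{H}^1(Y_\theta) > 0$ by \eqref{fibrelength}. If $y \in Y_\theta$ then $\rho_\theta^{-1}(y) \cap A$ has Hausdorff dimension at least $\dim A - 1 - \epsilon > 0$ and is therefore nonempty, which forces $y \in \rho_\theta(A)$. Hence $Y_\theta \subseteq \rho_\theta(A)$. Since $\rho_\theta(A)$ is contained in the line $\spn \gamma(\theta)$, on which $\mathcal{H}^1$ agrees with length, monotonicity of Hausdorff measure gives $\mathcal{H}^1(\rho_\theta(A)) \ge \mathcal{H}^1(Y_\theta) > 0$.

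This argument shows that $\left\{ \theta \in I : \mathcal{H}^1(\rho_\theta(A)) = 0 \right\} \subseteq E_\theta$, whence its Hausdorff dimension is at most $\dim E_\theta \le \frac{3 - \dim A}{2}$, which is \eqref{lengthexception}. There is no substantial obstacle in this deduction; the only steps needing a word of justification are the choice of $\epsilon$ in the nonempty interval $(0, \dim A - 1)$ (using the hypothesis $\dim A > 1$) and the elementary observation that a set of positive Hausdorff dimension is nonempty.
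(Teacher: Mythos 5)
Your proposal is correct and follows essentially the same route as the paper: the paper explicitly states that Corollary~\ref{lengthcorollary} is obtained from Corollary~\ref{corollaryborel} by taking $\epsilon$ positive but strictly smaller than $\dim A - 1$, which is exactly your argument. The only point the paper adds is that the corollary also follows directly from Theorem~\ref{projmeasure} (via Frostman's lemma and the definition of absolute continuity) or from Theorem~\ref{Lptheorem}, but the deduction from Corollary~\ref{corollaryborel} is the one it foregrounds and matches yours.
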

Rather than going through intersections, Corollary~\ref{lengthcorollary} also follows directly from Theorem~\ref{projmeasure} by Frostman's lemma and the definition of absolute continuity. Corollary~\ref{lengthcorollary} can also be deduced easily from Theorem~\ref{Lptheorem}. Theorem~\ref{projmeasure} also implies (see the proof of Theorem~1.3 in \cite{harris24}) that if $A \subseteq \mathbb{R}^3$ is $\mathcal{H}^t$-measurable with $0 < \mathcal{H}^t(A) < \infty$ for some $t>1$, then there is a Borel set $E_{\theta}$ with $\dim E_{\theta} \leq (3-t)/2$, such that any $\mathcal{H}^t$-measurable set $B \subseteq A$ with $\mathcal{H}^t(B)>0$ satisfies $\mathcal{H}^1(\rho_{\theta}(B))>0$ for all $\theta \in I \setminus E_{\theta}$.

The previous exceptional set inequality for the set in \eqref{lengthexception} was $\frac{4-\dim A}{3}$, from \cite{harris24}. However, with the zero length condition $\mathcal{H}^1( \rho_{\theta}(A) )=0$ replaced by the slightly stronger condition $\dim(\rho_{\theta}(A)) < 1$, Gan, Guth, and Maldague proved in~\cite{ganguthmaldague} that
\begin{equation} \label{ganguthmaldagueinequality} \dim \left\{ \theta \in I :  \dim\left( \rho_{\theta}(A) \right) < 1 \right\} \leq \frac{3- \dim A}{2}. \end{equation}
Therefore, Corollary~\ref{lengthcorollary} is a refinement of \eqref{ganguthmaldagueinequality} which weakens the requirement $\dim(\rho_{\theta}(A)) < 1
$ to just $\rho_{\theta}(A)$ having zero length (see also Remark~3 in \cite{ganguthmaldague} for a brief discussion). In \cite{ganguthmaldague}, they actually showed more generally\footnote{Because the right-hand side of \eqref{ganguthmaldagueinequality2} is unchanged if $t$ and $s$ are both increased by the same amount, \eqref{ganguthmaldagueinequality2} is a consequence of \eqref{ganguthmaldagueinequality}; because projection theorems for lower dimensional sets follow from projection theorems for higher dimensional sets by taking a sumset of $A$ with a random lower dimensional set (see e.g.~\cite[Proposition~A.1]{fasslerorponen}). There is also a simple proof (just using Frostman's lemma and the definition of Hausdorff dimension) that Theorem~\ref{Lptheorem} implies \eqref{ganguthmaldagueinequality2}.} that if $0 < s \leq 1$ and $A \subseteq \mathbb{R}^3$ is a Borel set with $\dim A = t$, then
\begin{equation} \label{ganguthmaldagueinequality2} \dim \left\{ \theta \in I :  \dim\left( \rho_{\theta}(A) \right) < s \right\} \leq \max\left\{ 0 , \frac{2- (t-s)}{2} \right\}. \end{equation}
There are some examples\footnote{This is a special case of a more general conjecture which we expect to state in a joint work elsewhere.} which suggest that the exceptional set inequality \eqref{ganguthmaldagueinequality} corresponding to $s=1$ might be sharp. If true, this would imply the sharpness of the exceptional set bound in Corollary~\ref{lengthcorollary}, Corollary~\ref{corollaryborel}, Corollary~\ref{corollaryintersection} and Theorem~\ref{projmeasure}; since improving any of these would improve \eqref{ganguthmaldagueinequality}. 

\begin{sloppypar} The above inequalities are often referred to as ``Falconer-type'' exceptional set bounds, because Falconer proved~\cite{falconer} that 
\[ \dim\left\{ L \in G(n,1) : \mathcal{H}^1(\pi_L(A)) = 0 \right\} \leq n- \dim A, \] whenever $A$ is a Borel subset of $\mathbb{R}^n$ with $\dim A >1$, and $\pi_L$ is orthogonal projection to the line $L$. Returning to the restricted projection problem in $\mathbb{R}^3$, for $t=s$, Pramanik,~Yang, and Zahl \cite{pramanikyangzahl} proved that if $A$ is a Borel set with $\dim A =t < 1$, then 
\[ \dim \left\{ \theta \in I :  \dim\left( \rho_{\theta}(A) \right) < t \right\} \leq t, \]
which is better than \eqref{ganguthmaldagueinequality2} when $t$ is close to $s$ (more precisely when $t< 2-s$), and which is usually referred to as a ``Kaufman-type'' exceptional set bound (after \cite{kaufman}). An earlier exceptional set bound was proved by Käenmäki, Orponen, and Venieri in \cite{KOV}, which solved the almost everywhere restricted projection problem for the curve $\gamma(\theta) = \frac{1}{\sqrt{2}} \left( \cos \theta, \sin \theta, 1 \right)$, but their exceptional set bound is weaker than \eqref{ganguthmaldagueinequality2} for all $t>s$.\end{sloppypar}

\subsection{Notes on the method}

The main improvement to the exceptional set inequality over the one from \cite{harris24} comes from using a ``small cap'' wave packet decomposition instead of a standard wave packet decomposition, for the cone. This decomposes the cone in frequency space into small caps (or tubes) instead of standard caps (or planks), and correspondingly decomposes physical space into slabs dual to these small caps (instead of planks). This is more natural for the projections $\rho_{\theta}$, because the inverse of a $\delta$-interval under $\rho_{\theta}$ is a $\delta$-slab (a $\delta$-neighbourhood of a ``light plane''). As in \cite{harris24}, the approach to proving Theorem~\ref{projmeasure} is via a ``good-bad'' decomposition of the measure $\mu$ (adapting arguments from \cite{GIOW,liu} originally set up for the distance set problem), and using small caps makes the bound on the ``bad'' part less wasteful. Unfortunately, the approach to bounding the ``good'' part in \cite{harris24} used refined decoupling, which does not seem to work well with small caps. Instead, the ``good'' part is converted back into a standard wave packet decomposition for the cone over planks $T$. For a given plank $T$ inside a slab $S$, there are either very few other planks $T'$ inside $S$ with $\mu(T) \sim \mu(T')$, or the $\mu$-measure of $T$ must be much smaller than $\mu(S)$. In either of these situations there is a gain in the argument over the one in \cite{harris24}, and a pigeonholing trick is used to make the argument precise and extract the optimal gain. This pigeonholing trick and the switching between wave packet decompositions (from around \eqref{intermezzo3} to \eqref{uncertainty3}) is the main novelty of the argument, and the key difference from \cite{harris24}. 

The argument from \cite{ganguthmaldague} also used small caps, and used (fractal) $L^4$ small cap decoupling for the cone.  Small cap decoupling is not used here, but only standard $L^6$ (refined) decoupling for the cone, so although there are some analogies to the argument in \cite{ganguthmaldague}, and the use of small caps is inspired by \cite{ganguthmaldague}, the two methods are substantially different. Since there is no yet known small cap decoupling for the 2-dimensional cone generated by the moment curve in higher dimensions (as far as I am aware), the approach used here might be more generalisable to higher dimensions. See~\cite{ganguowang} for the proof of the (almost everywhere) Hausdorff dimension version of the higher dimensional restricted projection problem.  

The exponent $p>1$ obtained in Theorem~\ref{Lptheorem} is extremely close to 1 and tends to 1 as $\alpha \to 1^+$; because the argument is formulated to optimise the bound on the exceptional set, which is unaffected by the value of $p$. The argument in \cite{greenharrisou} obtains a $p$ arbitrarily close to $3/2$ (thus bounded away from 1 as $\alpha \to 1^+$), but only if the measure $\lambda$ on $I$ is the Lebesgue measure, and the use of the Lebesgue measure in \cite{greenharrisou} is very important. Therefore, the two methods do not seem to be comparable. 

\subsection{Notation} \label{notation}

\begin{enumerate}[(i)] \item For a measure $\mu$ on $X$ and a measurable function $f: X \to Y$, the notation $f_{\sharp} \mu$ denotes the pushforward of $\mu$ under $f$, which is a measure on $Y$ defined by 
\[ (f_{\sharp}\mu)(E) = \mu(f^{-1}(E) ), \]
for any measurable set $E \subseteq Y$. Pushforwards are defined similarly for finite complex measures. 

\item The $s$-dimensional Hausdorff measure on Euclidean space will be denoted by $\mathcal{H}^s$. In particular, $\mathcal{H}^3$ denotes Lebesgue measure on $\mathbb{R}^3$, and $\mathcal{H}^1$ on any line in $\mathbb{R}^3$ is the Lebesgue measure on that line. 

\item For a Borel measure $\mu$ on $\mathbb{R}^n$ and $\alpha >0$, 
\[ c_{\alpha}(\mu) := \sup_{x \in \mathbb{R}^n, r >0 } \frac{ \mu(B(x,r) ) }{r^{\alpha}}. \]

\item Given two measures $\mu$ and $\nu$ on the same $\sigma$-algebra, the notation $\mu \ll \nu$ means that $\mu$ is absolutely continuous with respect to $\nu$, meaning that $\mu(A) = 0$ whenever $A$ is measurable and $\nu(A)=0$. The notation $\mu \not\ll \nu$ means that $\mu$ is not absolutely continuous with respect to $\nu$. 

\item For a subset $A$ of Euclidean space, $\dim A$ denotes the Hausdorff dimension of $A$. \end{enumerate}

\section{Small cap wave packet decomposition}

This section sets up the analogues of Lemma~2.2 and Lemma~2.3 from \cite{harris24} for the small cap wave packet decomposition. Throughout the next two sections, $\gamma: I \to S^2$ will be a fixed $C^2$ curve of unit speed (without loss of generality), with $\det(\gamma, \gamma', \gamma'')$ nonvanishing on $I$, where $I$ is a compact interval. The unit speed assumption implies that $\left\{ \gamma(\theta), \gamma'(\theta), \left( \gamma \times \gamma' \right)(\theta) \right\}$ is an orthonormal basis for $\mathbb{R}^3$, for any $\theta \in I$.

\begin{definition} \label{decompdefn} Let
\[ \Lambda = \bigcup_{j \geq 1} \Lambda_j, \]
where each $\Lambda_j$ is a collection of boxes (``small caps'') $\tau$ of dimensions $1 \times 1 \times 2^j$ with $\dist\left( \tau, \{0\}\right) \geq 2^{j-1}$, forming a boundedly overlapping cover of the $\sim 1$-neighbourhood of the truncated light cone $\Gamma_j$, where 
\[ \Gamma_j = \left\{ t \gamma(\theta) : 2^{j-1} \leq \lvert t \rvert \leq 2^j, \quad \theta \in I \right\}. \] 
Each $\tau \in \Lambda_j$ has an angle $\theta_{\tau}$ such that the long axis of $\tau$ is parallel to $\gamma(\theta_{\tau} )$. Let $\{\psi_{\tau}\}_{\tau \in \Lambda}$ be a smooth partition of unity subordinate to the cover $\left\{1.1\tau: \tau \in \Lambda\right\}$ of $\bigcup_{\tau \in \Lambda} \tau$, such that for any $\tau \in \Lambda_j$,  $\xi,v \in \mathbb{R}^3$, $t \in \mathbb{R}$ and any positive integer $n$,
\begin{multline} \label{derivconditiontau} \left\lvert \left( \frac{d}{dt} \right)^n \psi_{\tau}( \xi + t v ) \right\rvert \lesssim_n  \\
\left\lvert 2^{-j} \left\langle v, \gamma(\theta_{\tau}) \right\rangle \right\rvert^n + \left\lvert \left\langle v, \gamma'(\theta_{\tau}) \right\rangle \right\rvert^n  + \left\lvert \left\langle v, (\gamma \times \gamma')(\theta_{\tau}) \right\rangle \right\rvert^n. \end{multline}

Fix $\widetilde{\delta}>0$. For each $\tau \in \Lambda$, let $\mathbb{S}_{\tau}$ be a collection of boxes (referred to as ``slabs'') of dimensions $2^{j \widetilde{\delta}} \times 2^{j\widetilde{\delta}} \times 2^{-j\left(1-\widetilde{\delta}\right)}$, forming a boundedly overlapping cover of $\mathbb{R}^3$, such that for each $S \in \mathbb{S}_{\tau}$, the short direction of $S$ is parallel to $\gamma(\theta_{\tau})$. Let $\{\eta_S\}_{S \in \mathbb{S}_{\tau}}$ be a smooth partition of unity subordinate to the cover $\mathbb{S}_{\tau}$ of $\mathbb{R}^3$, such that for any $x,v \in \mathbb{R}^3$, any $t \in \mathbb{R}$ and any positive integer $n$, 
\begin{multline} \label{derivcondition} \left\lvert \left( \frac{d}{dt} \right)^n \eta_S( x + t v ) \right\rvert \lesssim_n  \\
\left\lvert 2^{j\left( 1- \widetilde{\delta} \right) } \left\langle v, \gamma(\theta_{\tau}) \right\rangle \right\rvert^n + \left\lvert 2^{-j\widetilde{\delta}} \left\langle v, \gamma'(\theta_{\tau}) \right\rangle \right\rvert^n  + \left\lvert 2^{-j \widetilde{\delta}} \left\langle v, (\gamma \times \gamma')(\theta_{\tau}) \right\rangle \right\rvert^n. \end{multline}

Given a finite complex Borel measure $\mu$ on $\mathbb{R}^3$, $\tau \in \Lambda$ and $S \in \mathbb{S}_{\tau}$, define
\[ M_S \mu = \eta_S\left( \mu \ast \widecheck{\psi_{\tau}} \right). \]  \end{definition}

The following non-stationary phase lemma will be important later; it is the small cap analogue of \cite[Lemma~2.2]{harris24}. 

\begin{lemma} \label{IBP} There exists an $r>0$, depending only on $\gamma$, such that the following holds. Let $j \geq 1$ and let $\tau \in \Lambda_j$. If $\theta \in I$ is such that $2^{-j\left(1- \widetilde{\delta} \right)} \leq \lvert \theta_{\tau} - \theta\rvert  \leq r$, then for any $S \in \mathbb{S}_{\tau}$, for any positive integer $N$ and for any finite complex Borel measure $\mu$,
\[ \left\lVert \rho_{\theta \sharp} M_S \mu \right\rVert_{L^1(\mathcal{H}^1)} \leq C 2^{-j\widetilde{\delta} N } \lvert \mu \rvert(\mathbb{R}^3), \]
where $C = C\left(N, \gamma, \widetilde{\delta}\right)$.  \end{lemma}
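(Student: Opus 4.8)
The plan is to reduce the claimed $L^1$ bound to a pointwise estimate on $\rho_{\theta\sharp}M_S\mu$ obtained by repeated integration by parts in the fibre direction, exploiting that the phase of the relevant oscillatory integral is non-stationary when $\lvert\theta_\tau-\theta\rvert$ is in the stated range. Concretely, $M_S\mu = \eta_S(\mu\ast\widecheck{\psi_\tau})$ has Fourier support essentially in $1.1\tau$, so on the physical side its restriction to a line $\rho_\theta^{-1}(y)$ — a translate of the plane orthogonal to $\gamma(\theta)$ intersected with the line $\spn\gamma(\theta)$, i.e. the push-forward is computed by integrating $M_S\mu$ over the affine plane $\{x : \langle x,\gamma(\theta)\rangle = |y|\}$ — can be written via Fourier inversion as an integral against $e^{2\pi i \langle x,\xi\rangle}$ with $\xi$ ranging over $1.1\tau$. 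First I would write out $\rho_{\theta\sharp}M_S\mu$ explicitly: for a test function $g$ on $\spn\gamma(\theta)$,
\[
\int g\, d(\rho_{\theta\sharp}M_S\mu) = \int_{\mathbb{R}^3} g(\rho_\theta(x))\, \eta_S(x)\,(\mu\ast\widecheck{\psi_\tau})(x)\,dx,
\]
and then insert the Fourier representation $(\mu\ast\widecheck{\psi_\tau})(x) = \int \psi_\tau(\xi)\widehat{\mu}(\xi) e^{2\pi i\langle x,\xi\rangle}\,d\xi$, so that the $L^1(\mathcal H^1)$ norm of $\rho_{\theta\sharp}M_S\mu$ is controlled by $\int \lvert \partial_{\xi}^? \cdots \rvert$-type expressions after we identify the correct kernel.

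The key step is the following: the Fourier transform of $\rho_{\theta\sharp}M_S\mu$, as a measure on the line $\spn\gamma(\theta)$, at frequency $s\in\mathbb{R}$ (parametrising the dual line) equals, up to harmless factors, $\int \eta_S \cdot (\mu\ast\widecheck{\psi_\tau})$ against $e^{-2\pi i s\langle x,\gamma(\theta)\rangle}$, which is $(M_S\mu)^{\wedge}(s\gamma(\theta)) = \eta_S^{\wedge}\ast\bigl(\psi_\tau\widehat\mu\bigr)(s\gamma(\theta))$. Thus I need a rapid-decay bound for $\eta_S^{\wedge}\ast(\psi_\tau\widehat\mu)$ evaluated along the ray $s\gamma(\theta)$ when $\lvert\theta_\tau-\theta\rvert\geq 2^{-j(1-\widetilde\delta)}$. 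The point is that $\psi_\tau$ is supported in the $1\times1\times2^j$ cap $1.1\tau$ whose long axis points in the $\gamma(\theta_\tau)$-direction, while $\eta_S^{\wedge}$ is concentrated (with Schwartz tails) on the dual box of dimensions $2^{-j\widetilde\delta}\times2^{-j\widetilde\delta}\times2^{j(1-\widetilde\delta)}$ with long axis also in the $\gamma(\theta_\tau)$-direction. The ray $s\gamma(\theta)$ meets the $2^j$-long cap $1.1\tau$ only for $\lvert s\rvert\lesssim 2^j$, and the transverse displacement of $\gamma(\theta)$ from $\gamma(\theta_\tau)$ over that length scale is $\sim 2^j\lvert\theta-\theta_\tau\rvert^2$ in the $(\gamma\times\gamma')$-direction (the curvature/cone direction) and $\sim 2^j\lvert\theta-\theta_\tau\rvert$ in the $\gamma'$-direction; comparing with the $O(1)$ transverse width of $1.1\tau$ and the $O(2^{j\widetilde\delta})$ transverse width of the dual slab, one integrates by parts $N$ times in $s$ (equivalently differentiates $\eta_S$ along the direction $\gamma(\theta)$ using \eqref{derivcondition}, or differentiates $\psi_\tau$ using \eqref{derivconditiontau}), gaining a factor $\bigl(2^{j\widetilde\delta}/(2^j\lvert\theta-\theta_\tau\rvert)\bigr)^N$ or similar; since $\lvert\theta-\theta_\tau\rvert\geq 2^{-j(1-\widetilde\delta)}$, the product $2^j\lvert\theta-\theta_\tau\rvert\geq 2^{j\widetilde\delta}$, and after absorbing the lower-order terms one arrives at the gain $2^{-j\widetilde\delta N}$ (with $N$ renamed). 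Each differentiation costs at most an $O(1)$ factor from the other two terms in \eqref{derivcondition} (the $\gamma$-direction derivative of $\eta_S$ is size $2^{j(1-\widetilde\delta)}$ but is balanced against the $\sim2^{-j}$-scale localisation inherent in $\widecheck{\psi_\tau}$), so the bookkeeping closes; the remaining $s$-integral and the trivial bound $\lvert\widehat\mu\rvert\leq\lvert\mu\rvert(\mathbb{R}^3)$, together with $\lVert\eta_S^{\wedge}\rVert_{L^1}\lesssim1$, produce the factor $\lvert\mu\rvert(\mathbb R^3)$ and the constant $C(N,\gamma,\widetilde\delta)$.

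I expect the main obstacle to be the geometric/bookkeeping heart of the non-stationary phase: correctly identifying which of the three coordinate directions in \eqref{derivcondition} (or \eqref{derivconditiontau}) supplies the decay, and verifying that the upper bound $\lvert\theta_\tau-\theta\rvert\leq r$ is what guarantees we stay in the regime where the relevant transverse separation is genuinely $\gtrsim 2^j\lvert\theta-\theta_\tau\rvert$ (curvature estimates like $\lvert\langle\gamma(\theta)-\gamma(\theta_\tau),\gamma'(\theta_\tau)\rangle\rvert\sim\lvert\theta-\theta_\tau\rvert$ and $\lvert\langle\gamma(\theta),(\gamma\times\gamma')(\theta_\tau)\rangle\rvert\sim\lvert\theta-\theta_\tau\rvert^2$ hold only for $\lvert\theta-\theta_\tau\rvert$ below some $r=r(\gamma)$, coming from the nonvanishing of $\det(\gamma,\gamma',\gamma'')$ and the $C^2$ bound). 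Once that $r$ is fixed and these Taylor estimates are in hand, the integration by parts is routine; I would carry it out by fixing coordinates adapted to the frame $\{\gamma(\theta_\tau),\gamma'(\theta_\tau),(\gamma\times\gamma')(\theta_\tau)\}$, writing the oscillatory factor as $e^{2\pi i\langle x,\,s\gamma(\theta)\rangle}$ with $x$ localised to $S$ and $\xi=s\gamma(\theta)$ localised (after convolution) near $1.1\tau$, and differentiating $N$ times in $s$ — each derivative hitting $\psi_\tau$ is governed by \eqref{derivconditiontau} and each hitting $\eta_S$ by \eqref{derivcondition}.
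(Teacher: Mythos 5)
Your overall strategy matches the paper's: pass to the Fourier side, write $\widehat{M_S\mu}(t\gamma(\theta)) = \int \widehat{\eta_S}(\xi)\,\psi_\tau(t\gamma(\theta)-\xi)\,\widehat\mu(t\gamma(\theta)-\xi)\,d\xi$, and exploit that for $|t|\sim 2^j$ the point $t\gamma(\theta)$ is displaced from the $\gamma(\theta_\tau)$-axis by $\gtrsim 2^j|\theta-\theta_\tau| \gtrsim 2^{j\widetilde\delta}$ in the $\gamma'(\theta_\tau)$ direction (and you correctly identify that the Taylor estimate $|\langle\gamma(\theta),\gamma'(\theta_\tau)\rangle| \sim |\theta-\theta_\tau|$, valid for $|\theta-\theta_\tau|\le r(\gamma)$, is what fixes $r$). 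That geometric picture is exactly the engine of the paper's proof.

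However, there is a concrete error in your bookkeeping. The gain you write down, $\bigl(2^{j\widetilde\delta}/(2^j|\theta-\theta_\tau|)\bigr)^N$, equals $1$ at the endpoint $|\theta-\theta_\tau| = 2^{-j(1-\widetilde\delta)}$, so it produces no decay precisely in the hardest case. The culprit is a sign on the $\widetilde\delta$-exponent: the \emph{frequency-side} dual box $\widehat{S}$ has transverse width $2^{-j\widetilde\delta}$, whereas $2^{j\widetilde\delta}$ is the transverse width of the \emph{physical} slab $S$. When you integrate by parts in the $\gamma'(\theta_\tau)$-direction, the derivative hitting $\eta_S$ costs $\lesssim 2^{-j\widetilde\delta}$ (by \eqref{derivcondition}) and the derivative hitting $\mu\ast\widecheck{\psi_\tau}$ costs $\lesssim 1$ (since $1.1\tau$ has $\gamma'(\theta_\tau)$-extent $O(1)$), so the per-step cost is $\max(2^{-j\widetilde\delta},1)=1$, against a phase velocity $\gtrsim 2^{j\widetilde\delta}$; the gain per step is therefore $\lesssim 2^{-j\widetilde\delta}$, giving $2^{-j\widetilde\delta N}$ as desired — but this is what needs to be said, and it is not what your formula says. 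Relatedly, your parenthetical ``(equivalently differentiates $\eta_S$ along the direction $\gamma(\theta)$ \ldots)'' cannot be right as stated: in the $\gamma(\theta_\tau)$-direction the derivative of $\mu\ast\widecheck{\psi_\tau}$ is $\sim 2^j$, which exactly cancels the phase velocity $s\sim 2^j$, as you yourself note a sentence later; so the gain can only come from the $\gamma'$-direction.

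Finally, the paper organizes this more cleanly than a bare integration by parts: it first reduces $L^1$ to $L^\infty$ via the observation that $\rho_\theta(\supp M_S\mu)$ has small diameter (this also produces the harmless $|\theta-\theta_\tau|^{-1}$ factor), then uses one-dimensional Hausdorff--Young to reduce to $\int_{\mathbb R}|\widehat{M_S\mu}(t\gamma(\theta))|\,dt$, and then proves the clean disjointness statement that $\{t\gamma(\theta)-1.1\tau : t\in\mathbb{R}\}$ misses the enlarged box $2^{j\widetilde\delta/2}\widehat{S}$, so only the Schwartz tail of $\widehat{\eta_S}$ outside $2^{j\widetilde\delta/2}\widehat{S}$ contributes. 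This avoids having to track exactly which factor each derivative hits. Your proposal would close once the exponent is corrected and the reduction to the Fourier-side $L^1$ integral is made precise, but as written the decay you claim is not there.
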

\begin{remark} The idea is that $\mu \ast \widecheck{\psi_{\tau}}$ is Fourier supported in $1.1\tau$, and therefore $M_S \mu = \eta_S\left( \mu \ast \widecheck{\psi_{\tau}} \right)$ is ``essentially'' Fourier supported in $1.11 \tau$. Since $\widehat{ \rho_{\theta\sharp} \mu} = \widehat{\mu} \circ \rho_{\theta}$, this means that the Fourier transform of $\rho_{\theta\sharp}  M_S \mu$ will be negligible when $2^{-j\left(1- \widetilde{\delta} \right)} \leq \lvert \theta_{\tau} - \theta\rvert \leq r$, since it is evaluating $\widehat{M_S \mu}$ outside its ``essential'' support. \end{remark}
\begin{proof} Since $\rho_{\theta}(S)$ has diameter $\lesssim \left\lvert \theta-\theta_{\tau} \right\rvert$, it suffices to prove that 
\[ \left\lVert \rho_{\theta \sharp} M_S \mu \right\rVert_{\infty} \lesssim 2^{-j\widetilde{\delta} N } \left\lvert \theta-\theta_{\tau} \right\rvert^{-1} \lvert \mu \rvert(\mathbb{R}^3). \]
 By the Hausdorff-Young inequality (in one dimension), it is enough to show that
\begin{multline*}  \int_{\mathbb{R}} \left\lvert \widehat{M_S \mu }\left( t \gamma(\theta ) \right) \right\rvert \, dt = \int_{\mathbb{R}} \left\vert \int_{\mathbb{R}^3} \widehat{\eta_S}( \xi ) \psi_{\tau}(t \gamma(\theta) -\xi) \widehat{\mu}(t \gamma(\theta) -\xi) \, d\xi \right\rvert \, dt\\
\lesssim 2^{-j\widetilde{\delta} N } \left\lvert \theta-\theta_{\tau} \right\rvert^{-1} \lvert \mu \rvert(\mathbb{R}^3). \end{multline*}
Since $\left\lVert \widehat{\mu} \right\rVert_{\infty} \leq \lvert \mu \rvert(\mathbb{R}^3)$, it suffices to prove that
\begin{equation} \label{statphase} \int_{\mathbb{R}} \int_{\mathbb{R}^3} \left\lvert \widehat{\eta_S}(\xi )\right\rvert \left\lvert \psi_{\tau}(t \gamma(\theta) -\xi) \right\rvert \, d\xi \, dt \lesssim 2^{-j\widetilde{\delta} N } \left\lvert \theta-\theta_{\tau} \right\rvert^{-1}. \end{equation}
Let $\widehat{S}$ be the box of dimensions $2^{-j \widetilde{\delta}} \times 2^{-j \widetilde{\delta}} \times 2^{j\left(1-\widetilde{\delta} \right)}$ around the origin dual to the slab $S$. If $\xi \in 2^{j\widetilde{\delta}/2}\widehat{S}$, it will be shown that $t \gamma(\theta) -\xi\notin 1.1\tau$ for any $t \in \mathbb{R}$, for any $\theta$ with $2^{-j\left(1- \widetilde{\delta} \right)} \leq \lvert \theta_{\tau} - \theta\rvert  \leq r$. Suppose for a contradiction that $t \gamma(\theta) -\xi \in 1.1\tau$ for some $t \in \mathbb{R}$ and $\xi \in 2^{j \widetilde{\delta}/2}\widehat{S}$. Then $\lvert t\rvert  \sim 2^j$ since $\lvert \xi\rvert  \ll 2^j$. Write 
\[ \xi = \xi_1 \gamma\left(\theta_{\tau}\right) + \xi_2 \gamma'\left(\theta_{\tau}\right) + \xi_3 (\gamma \times \gamma')\left(\theta_{\tau}\right), \]
where 
\[ \lvert \xi_1\rvert  \leq 2^{j\widetilde{\delta}/2} 2^{j(1-\widetilde{\delta} ) }, \quad \lvert \xi_2\rvert , \lvert \xi_3\rvert  \leq 2^{j\widetilde{\delta}/2} 2^{-j \widetilde{\delta} } . \]
Then $\left\lvert\langle \gamma(\theta), \gamma'(\theta_{\tau} ) \rangle \right\rvert \sim \lvert \theta-\theta_{\tau}\rvert $ for all $\lvert \theta-\theta_{\tau} \rvert  \leq r$ by $C^2$ Taylor approximation (for $r$ sufficiently small depending only on $\gamma$; using that $\gamma$ has unit speed). Hence 
\[ \left\lvert\langle  t\gamma(\theta)- \xi, \gamma'(\theta_{\tau} ) \rangle \right\rvert \geq \lvert t \rvert \left\lvert \left\langle \gamma(\theta), \gamma'(\theta_{\tau} ) \right\rangle \right\rvert - \lvert \xi_2\rvert \gtrsim 2^{j \widetilde{\delta}}, \]
 contradicting the assumption that $t\gamma(\theta) - \xi \in 1.1\tau$. Therefore, to prove \eqref{statphase} it remains to show that 
\begin{equation} \label{statphase2} \int_{\mathbb{R}^3 \setminus 2^{j\widetilde{\delta}/2}\widehat{S}} \left\lvert \widehat{\eta_S}(\xi )\right\rvert \, d\xi \leq C 2^{-j\widetilde{\delta} N }. \end{equation}
That \eqref{statphase2} implies \eqref{statphase} follows from Fubini's theorem; using that for fixed $\xi$, the set of $t$ for which $t \gamma(\theta) - \xi \in 1.1 \tau$ has length $\lesssim \left\lvert \theta-\theta_{\tau} \right\rvert^{-1}$. 

It is straightforward to check (e.g.~by \eqref{derivcondition} and integration by parts) that if $\xi \notin 2^{j\widetilde{\delta}/2}\widehat{S}$, then 
\[ \left\lvert \widehat{\eta_S}(\xi)\right\rvert \leq C_{N, \widetilde{\delta}} \mathcal{H}^3(S) \dist\left( \xi, \widehat{S} \right)^{-N}, \]
for any $N$. Substituting into \eqref{statphase2}, using that $\mathcal{H}^3(S) \mathcal{H}^3\left(\widehat{S}\right) \lesssim 1$, and summing the geometric series over the dyadic distance of $\xi$ from $\widehat{S}$, yields 
\[ \eqref{statphase2}  \lesssim C 2^{-j\widetilde{\delta} N }, \]
which finishes the proof.   \end{proof} 

The proof of the following lemma is very similar to \cite[Lemma~2.4]{harris24}, but the details are included for completeness.

\begin{lemma} \label{MTf} Let $j \geq 1$ and let $\tau \in \Lambda_j$. For any finite complex Borel measure $\mu$ and any $S \in \mathbb{S}_{\tau}$, 
\[ \left\lVert M_S \mu \right\rVert_{L^1(\mathbb{R}^3) } \leq 2^{3j \widetilde{\delta} } \lvert\mu \rvert(2S) + C_N 2^{-j \widetilde{\delta} N} \left\lvert\mu\right\rvert(\mathbb{R}^3), \]
for any positive integer $N$. \end{lemma}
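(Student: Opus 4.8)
The plan is to estimate $\lVert M_S\mu\rVert_{L^1(\mathbb{R}^3)} = \int_{\mathbb{R}^3}\lvert\eta_S(x)\rvert\,\lvert(\mu\ast\widecheck{\psi_\tau})(x)\rvert\,dx$ by splitting the convolution kernel $\widecheck{\psi_\tau}$ into its essential support and a rapidly decaying tail. First I would record the pointwise estimate on $\widecheck{\psi_\tau}$ coming from the derivative bounds \eqref{derivconditiontau}: since $\psi_\tau$ is adapted to the $1\times 1\times 2^j$ box $\tau$ (with long axis along $\gamma(\theta_\tau)$), repeated integration by parts in each of the coordinate directions $\gamma(\theta_\tau),\gamma'(\theta_\tau),(\gamma\times\gamma')(\theta_\tau)$ gives that $\widecheck{\psi_\tau}$ is concentrated on the dual box $\widetilde\tau$ of dimensions $1\times 1\times 2^{-j}$ centered at $0$, with tails
\[ \lvert\widecheck{\psi_\tau}(y)\rvert \leq C_N\,2^{-j}\bigl(1 + \mathrm{dist}_{\widetilde\tau}(y)\bigr)^{-N}, \]
where $\mathrm{dist}_{\widetilde\tau}$ measures (anisotropic) distance in units of the side lengths of $\widetilde\tau$, i.e.\ $\lvert\langle y,\gamma'(\theta_\tau)\rangle\rvert + \lvert\langle y,(\gamma\times\gamma')(\theta_\tau)\rangle\rvert + 2^{j}\lvert\langle y,\gamma(\theta_\tau)\rangle\rvert$. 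The normalization $2^{-j}$ is $\mathcal{H}^3(\widetilde\tau) = \mathcal{H}^3(\tau)^{-1}\cdot\text{const}$ so that $\int\lvert\widecheck{\psi_\tau}\rvert \lesssim 1$.

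Next, fix $S\in\mathbb{S}_\tau$; it has dimensions $2^{j\widetilde\delta}\times 2^{j\widetilde\delta}\times 2^{-j(1-\widetilde\delta)}$ with the short side along $\gamma(\theta_\tau)$. The key geometric point is that $2S + \widetilde\tau \subseteq CS$ — indeed the dual box $\widetilde\tau$ has dimensions $1\times 1\times 2^{-j}$, which is much smaller than $S$ in the two long directions (since $2^{j\widetilde\delta}\geq 1$) and comparable to $S$ in the short direction (since $2^{-j}\leq 2^{-j(1-\widetilde\delta)}$), so translating $2S$ by anything within $O(1)$ of $\widetilde\tau$ stays inside a bounded dilate of $S$. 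Therefore I split
\[ \lvert(\mu\ast\widecheck{\psi_\tau})(x)\rvert \leq \int_{y: x-y\in 2S}\lvert\widecheck{\psi_\tau}(y)\rvert\,d\lvert\mu\rvert(x-y) + \int_{y: x-y\notin 2S}\lvert\widecheck{\psi_\tau}(y)\rvert\,d\lvert\mu\rvert(x-y). \]
For the first term, since $\eta_S$ is supported in (a bounded dilate of) $S$ and $\lVert\eta_S\rVert_\infty\lesssim 1$, integrating in $x$ and using Fubini with $\int\lvert\widecheck{\psi_\tau}\rvert\lesssim 1$ gives a contribution $\lesssim \lvert\mu\rvert(2S)$; the factor $2^{3j\widetilde\delta}$ in the statement is the slack one can afford here (in fact one does not even need it for this term, but it is harmless, and it is what the later application wants). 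For the second (tail) term: when $x\in CS$ and $x-y\notin 2S$, the displacement $y$ must be large compared to $S$ in at least one direction. If $y$ is large in one of the two long directions of $S$ (which are the $\gamma'(\theta_\tau)$, $(\gamma\times\gamma')(\theta_\tau)$ directions, the unit directions of $\widetilde\tau$), then $\mathrm{dist}_{\widetilde\tau}(y)\gtrsim 2^{j\widetilde\delta}$ and the rapid decay of $\widecheck{\psi_\tau}$ contributes $2^{-j\widetilde\delta N'}$; if $y$ is large in the short ($\gamma(\theta_\tau)$) direction of $S$, then $\lvert\langle y,\gamma(\theta_\tau)\rangle\rvert\gtrsim 2^{-j(1-\widetilde\delta)}$, so $2^j\lvert\langle y,\gamma(\theta_\tau)\rangle\rvert\gtrsim 2^{j\widetilde\delta}$ and again $\mathrm{dist}_{\widetilde\tau}(y)\gtrsim 2^{j\widetilde\delta}$. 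Either way, on the tail region $\lvert\widecheck{\psi_\tau}(y)\rvert\leq C_{N'}2^{-j}2^{-j\widetilde\delta N'}(1+\mathrm{dist}_{\widetilde\tau}(y))^{-3}$ say, which integrates against $d\lvert\mu\rvert$ and $dx$ (over $x\in CS$, contributing $\mathcal{H}^3(CS) = 2^{2j\widetilde\delta}2^{-j(1-\widetilde\delta)}\cdot\text{const}$, harmlessly absorbed) to give $\lesssim C_{N'}2^{-j\widetilde\delta N'}\lvert\mu\rvert(\mathbb{R}^3)$; choosing $N'$ appropriately in terms of $N$ yields the claimed $C_N 2^{-j\widetilde\delta N}\lvert\mu\rvert(\mathbb{R}^3)$.

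The main obstacle is bookkeeping the anisotropic geometry cleanly: one must keep straight that the decay of $\widecheck{\psi_\tau}$ is measured in the coordinate frame $\{\gamma(\theta_\tau),\gamma'(\theta_\tau),(\gamma\times\gamma')(\theta_\tau)\}$ with scales $(2^{-j},1,1)$ for the dual box, while the slab $S$ lives in the same frame but with scales $(2^{-j(1-\widetilde\delta)},2^{j\widetilde\delta},2^{j\widetilde\delta})$, and verify the two containments above ($2S+\widetilde\tau\subseteq CS$, and ``outside $2S$ forces $\mathrm{dist}_{\widetilde\tau}\gtrsim 2^{j\widetilde\delta}$'') — these are elementary once the frame is fixed. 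A secondary point is making the integration-by-parts bound on $\widecheck{\psi_\tau}$ rigorous from \eqref{derivconditiontau}; this is standard (apply the differential operators dual to the box axes $N$ times each and use that $\widehat{\widecheck{\psi_\tau}} = \psi_\tau$ is supported in $1.1\tau$ of volume $\sim 2^j$), and is essentially identical to the corresponding step in \cite[Lemma~2.4]{harris24}.
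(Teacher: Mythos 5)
Your approach is essentially the same as the paper's: split the convolution at $2S$, use the integrability of $\widecheck{\psi_\tau}$ for the main term, and use the rapid anisotropic decay of $\widecheck{\psi_\tau}$ off its dual box for the tail. The paper bounds the main term by $\lVert\widecheck{\psi_\tau}\rVert_\infty\mathcal H^3(S)\,\lvert\mu\rvert(2S)\lesssim 2^{3j\widetilde\delta}\lvert\mu\rvert(2S)$, whereas you use Fubini and $\lVert\widecheck{\psi_\tau}\rVert_{L^1}\lesssim 1$ to get the slightly sharper $\lvert\mu\rvert(2S)$; both are fine, and your geometric observation that escaping $2S$ forces $\mathrm{dist}_{\widetilde\tau}(x-z)\gtrsim 2^{j\widetilde\delta}$ is exactly the content of the paper's dyadic sum over $k$ with $x\notin 2^kS_0$.

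One genuine slip: your pointwise estimate should read $\lvert\widecheck{\psi_\tau}(y)\rvert\leq C_N\,2^{j}\bigl(1+\mathrm{dist}_{\widetilde\tau}(y)\bigr)^{-N}$, i.e.\ the prefactor is $\mathcal H^3(\tau)\sim 2^j$ (the measure of the Fourier support, since $\widecheck{\psi_\tau}(0)=\int\psi_\tau\sim 2^j$), not $\mathcal H^3(\widetilde\tau)\sim 2^{-j}$. Your stated justification ``the normalization $2^{-j}$ is $\mathcal H^3(\widetilde\tau)$ so that $\int\lvert\widecheck{\psi_\tau}\rvert\lesssim 1$'' is backwards: with prefactor $2^{-j}$ and essential support of volume $2^{-j}$ you would get $\int\lvert\widecheck{\psi_\tau}\rvert\lesssim 2^{-2j}$. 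With the corrected prefactor $2^j$, the $L^1$ bound $\int\lvert\widecheck{\psi_\tau}\rvert\lesssim 2^j\cdot\mathcal H^3(\widetilde\tau)\sim 1$ and all subsequent estimates go through as you wrote them (the tail calculation is then identical to the paper's, since $\mathcal H^3(\tau)\mathcal H^3(S)\lesssim 2^{3j\widetilde\delta}$ absorbs the prefactor into a harmless power of $2^{j\widetilde\delta}$).
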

\begin{proof} It will be assumed that $\mu$ is positive; the complex case is similar by replacing $\mu$ by $\lvert \mu\rvert $ in \eqref{prestarcircle} and \eqref{starcircle} below. By definition,
\begin{align}\notag \left\lVert M_S \mu \right\rVert_{L^1(\mathbb{R}^3) } &= \int \eta_S(x) \left\lvert \int \widecheck{\psi_{\tau}}(x-y) \, d\mu(y) \right\rvert \, dx \\
\label{prestarcircle} &\leq \int_{S} \int_{2S} \left\lvert \widecheck{\psi_{\tau}}(x-y) \right\rvert \, d\mu(y) \, dx \\
\label{starcircle} &\quad + \int_S \int_{\mathbb{R}^3 \setminus 2S} \left\lvert \widecheck{\psi_{\tau}}(x-y) \right\rvert \, d\mu(y) \, dx. \end{align}
By the Hausdorff-Young inequality and the property $\mathcal{H}^3(\tau) \mathcal{H}^3(S) = 2^{3j \widetilde{\delta}}$, the integral in \eqref{prestarcircle} satisfies 
\begin{equation} \label{star1} \int_{S} \int_{2S} \left\lvert \widecheck{\psi_{\tau}}(x-y) \right\rvert \, d\mu(y) \, dx  \leq 2^{3j \widetilde{\delta} } \mu(2S). \end{equation}
The integral in \eqref{starcircle} satisfies
\[ \int_S \int_{\mathbb{R}^3 \setminus 2S} \left\lvert \widecheck{\psi_{\tau}}(x-y) \right\rvert \, d\mu(y) \, dx \leq \mu(\mathbb{R}^3) \int_{\mathbb{R}^3 \setminus S_0} \left\lvert \widecheck{\psi_{\tau}} \right\rvert, \]  
where $S_0$ is the translate of the slab $S$ to the origin, parallel to $S$. Integration by parts (using \eqref{derivconditiontau}) gives that for any $k \geq 0$, for any $x \in \mathbb{R}^3 \setminus 2^k S_0$, 
\[ \left\lvert \widecheck{\psi_{\tau}}(x) \right\rvert \lesssim_N 2^{-kN-j \widetilde{\delta} N} \mathcal{H}^3(\tau). \]
Summing a geometric series over $k \geq 0$, using $\mathcal{H}^3(\tau) \mathcal{H}^3(S) \lesssim 2^{3j \widetilde{\delta}}$ again, gives
\begin{equation} \label{star2} \int_S \int_{\mathbb{R}^3 \setminus 2S} \left\lvert \widecheck{\psi_{\tau}}(x-y) \right\rvert \, d\mu(y) \, dx \leq C_N\mu(\mathbb{R}^3)2^{-j \widetilde{\delta} N}.  \end{equation}
Substituting \eqref{star1} and \eqref{star2} into \eqref{prestarcircle} and \eqref{starcircle} finishes the proof. \end{proof} 

\section{Proofs of the main results}

The first lemma of this section, Lemma~\ref{energylemma} below, is the main ingredient needed to prove Theorem~\ref{projmeasure}. It does not trivially imply Theorem~\ref{projmeasure}, but the actual proof of Theorem~\ref{projmeasure} will be very similar to the proof of Lemma~\ref{energylemma}, and will also use Lemma~\ref{energylemma} in a crucial way. Moreover, Lemma~\ref{energylemma} does imply \eqref{ganguthmaldagueinequality}, and minor modifications to the proof can yield \eqref{ganguthmaldagueinequality2}. In the statement of Lemma~\ref{energylemma}, the lower integral $\int_{*} f$ of a function $f$ is the supremum over the integrals of measurable functions $\phi$ with $0 \leq \phi \leq f$ (see \cite[p.~13]{mattila1995}) (which matches the standard integral when $f$ is measurable). The application of Lemma~\ref{energylemma} will be in the case where the integrand is measurable and the lower integral is just a standard integral, but using the lower integral avoids having to consider measurability issues in the statement.

\begin{lemma}  \label{energylemma} Let $\beta \in [0,1]$ and let $\alpha = 3-2\beta$. For any $\epsilon >0$, there exists $\delta>0$  and $C= C(\delta, \epsilon, \gamma)$ such that 
\begin{equation} \label{geometric} \int_{*}  (\rho_{\theta \sharp}\mu)\left( \bigcup_{D \in \mathbb{D}_{\theta}} D \right) \, d\lambda(\theta)  \leq C R^{-\delta} \mu(\mathbb{R}^3) c_{\beta}(\lambda)^{1/2} \lambda(\mathbb{R})^{1/2}, \end{equation}
for any $R \geq 1$,  for any Borel measure $\lambda$ on $I$ with $c_{\beta}(\lambda) < \infty$, for any Borel measure $\mu$ on $B_3(0,1)$ with $c_{\alpha}(\mu) \leq 1$, for any family of sets $\{ \mathbb{D}_{\theta} \}_{\theta \in I}$, where each $\mathbb{D}_{\theta}$ is a collection of radius $R^{-1}$ intervals in the span of $\gamma(\theta)$, of cardinality $\lvert\mathbb{D}_{\theta}\rvert \leq R^{1-\epsilon} \mu(\mathbb{R}^3)$. \end{lemma}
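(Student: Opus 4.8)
The plan is to run a "good--bad" decomposition of $\mu$ at scale $R^{-1}$, adapted to the small cap wave packet decomposition of Section~2. First I would fix a large integer $j$ with $2^j \sim R$ (and $\widetilde{\delta}$ a small power of $\epsilon$) and, for each $\tau \in \Lambda_j$, split $\mu = \sum_{S \in \mathbb{S}_\tau} M_S \mu$ up to the negligible tail from Lemma~\ref{MTf}. The point of working with slabs is that $\rho_\theta^{-1}$ of an $R^{-1}$-interval is exactly an $R^{-1}$-slab, so for $\theta$ close to $\theta_\tau$ the relevant slabs $S \in \mathbb{S}_\tau$ with $M_S\mu$ large are precisely the ones that matter; by Lemma~\ref{IBP}, the contribution of $M_S\mu$ to $\rho_{\theta\sharp}\mu$ is super-polynomially small once $|\theta - \theta_\tau| \geq 2^{-j(1-\widetilde\delta)}$, so each $\theta$ only "sees" the small caps $\tau$ with $|\theta - \theta_\tau| \lesssim R^{-1+\widetilde\delta}$. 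Call a slab $S$ (or the piece $M_S\mu$) \emph{heavy} if $\mu(2S) \gtrsim R^{-\alpha} \cdot R^{C\widetilde\delta}$ times some threshold, and \emph{light} otherwise; the light part is controlled directly because each $\mathbb{D}_\theta$ has only $\leq R^{1-\epsilon}\mu(\mathbb{R}^3)$ intervals, so $(\rho_{\theta\sharp}\mu^{\mathrm{light}})(\bigcup_{D} D) \leq R^{1-\epsilon}\mu(\mathbb{R}^3) \cdot (\text{light threshold})$, and one checks this beats $R^{-\delta}\mu(\mathbb{R}^3)$ before integrating in $\theta$.

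The heavy part is where the Frostman condition $c_\alpha(\mu) \leq 1$ with $\alpha = 3 - 2\beta$ enters. For a heavy slab $S$, the bound $\mu(2S) \leq c_\alpha(\mu) (\operatorname{diam} S)^\alpha \lesssim R^{-\alpha(1-\widetilde\delta)}$ from the slab having thickness $\sim R^{-1}$ forces the \emph{number} of heavy slabs (inside a fixed $O(1)$-ball, or the number that can be simultaneously heavy for a fixed direction) to be at most $\sim R^{\alpha}\mu(\mathbb{R}^3)$ up to $R^{C\widetilde\delta}$ losses. Then I would integrate in $\theta$: for each heavy $\tau$ the set of $\theta \in I$ with $|\theta - \theta_\tau| \leq R^{-1+\widetilde\delta}$ has $\lambda$-measure $\lesssim c_\beta(\lambda) R^{(-1+\widetilde\delta)\beta}$, and by Cauchy--Schwarz in $\theta$ (this is where the $c_\beta(\lambda)^{1/2}\lambda(\mathbb{R})^{1/2}$ on the right-hand side comes from) one reduces to an $L^2$-type count of overlaps of the slabs. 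The exponent arithmetic should be exactly: (number of heavy caps) $\sim R^\alpha \mu(\mathbb{R}^3)$, times (each contributes measure $\sim R^{-1}$ per interval after projection), times ($\lambda$-measure per cap) $\sim R^{-\beta}$, and $R^{\alpha} \cdot R^{-1} \cdot R^{-\beta} = R^{3 - 2\beta - 1 - \beta} = R^{2 - 3\beta}$ — here one uses the Cauchy--Schwarz split so that only half a power of $\lambda$'s Frostman exponent is spent, giving the clean $R^{-\beta/2 \cdot \text{(something)}}$; the key identity $\alpha = 3 - 2\beta$ is precisely what makes the net power of $R$ strictly negative (equal to $-\delta$ for suitable small $\delta$) once $\widetilde\delta$ and the heaviness thresholds are tuned against $\epsilon$.

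The main obstacle I expect is not the bookkeeping but getting an honest $L^2$ orthogonality/overlap estimate for the heavy pieces after projecting by $\rho_\theta$ — that is, controlling how many heavy slabs $S$ project to overlap the \emph{same} $R^{-1}$-interval $D$ for a given $\theta$, and simultaneously how the relevant $\theta$-intervals overlap across different caps $\tau$. This is exactly the place where in the full Theorem~\ref{projmeasure} one invokes $L^6$ refined decoupling for the cone and the pigeonholing/switching-of-wave-packet-decompositions trick advertised in the introduction, but for Lemma~\ref{energylemma} (which only needs to recover \eqref{ganguthmaldagueinequality}, i.e.\ the $\dim(\rho_\theta A) < 1$ statement) a cruder transversality/geometric count of slabs should suffice: two slabs $S \in \mathbb{S}_\tau$, $S' \in \mathbb{S}_{\tau'}$ with $\theta_\tau \neq \theta_{\tau'}$ intersect in a set of diameter $\lesssim |\theta_\tau - \theta_{\tau'}|^{-1} R^{-2+\widetilde\delta}$-ish, and feeding this into the double integral in $\theta$ and summing the resulting geometric series in the dyadic separation $|\theta_\tau - \theta_{\tau'}|$ closes the estimate. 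I would set up the heaviness threshold as a free parameter, carry the computation symbolically, and optimize it at the very end against $\delta$ and $\epsilon$.
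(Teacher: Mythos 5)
There are genuine gaps in the proposal, and the approach as sketched would not close at the sharp exponent.

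The central claim that the ``heavy'' part can be handled by a ``cruder transversality/geometric count of slabs'' is exactly where the proposal breaks down. Lemma~\ref{energylemma} is a sharp, critical-exponent estimate (as the paper notes, it already implies the Gan--Guth--Maldague bound \eqref{ganguthmaldagueinequality}), and the paper's proof of the lemma itself invokes refined $L^6$ decoupling for the cone (Theorem~\ref{refineddecouplingtheorem}) at display \eqref{afterdecoupling}, together with the switch from small-cap wave packets to standard cone wave packets, the pigeonholing on the overlap parameter $N$, and the weighted geometric mean in \eqref{uncertainty3}. A direct $L^2$ expansion and dyadic summation in $\lvert\theta_{\tau}-\theta_{\tau'}\rvert$ is precisely the style of argument from K\"aenm\"aki--Orponen--Venieri, which the paper notes was superseded by \eqref{ganguthmaldagueinequality}; it does not reach the exponent $\frac{3-\alpha}{2}$. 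So deferring the heavy part to a transversality count is not a simplification that is available here.

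The second missing ingredient is the self-improving bootstrap. The paper does not split good/bad at the threshold $R^{-\alpha}$. Instead, it sets $\epsilon_0$ to be slightly larger than the infimum of all exponents $\epsilon$ for which the lemma already holds, defines the bad slabs by $\mu(100S)\geq 2^{-j(1-\epsilon_0)}$, bounds the bad contribution by applying Lemma~\ref{energylemma} itself at exponent $\epsilon_0$ (this is what makes the bad sum a decaying geometric series in $j$), and then proves the lemma for any $\epsilon>\tfrac{2\epsilon_0}{3}$, which drives $\epsilon_0$ down to $0$. Your proposal has a fixed, $\alpha$-dependent threshold and no induction on $\epsilon$; the ``light'' bound you write, $\lvert\mathbb{D}_{\theta}\rvert\cdot(\text{threshold})$, is the right shape but is only useful if the threshold is around $R^{-(1-\epsilon')}$ with $\epsilon'$ controlled by a previous instance of the lemma, not $R^{-\alpha}$. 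Related to this: you also miss that the paper decomposes over a whole range of dyadic scales $j\geq J$ with $2^J\sim R^{\epsilon/1000}$, rather than a single scale $2^j\sim R$.

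A smaller but real error: the slab $S\in\mathbb{S}_{\tau}$ has dimensions $2^{j\widetilde\delta}\times 2^{j\widetilde\delta}\times 2^{-j(1-\widetilde\delta)}$, so its diameter (and the diameter of $2S$ intersected with the unit ball where $\mu$ lives) is $\gtrsim 1$, not $\sim R^{-1}$. The Frostman bound therefore gives only $\mu(2S)\lesssim 1$, not $\mu(2S)\lesssim R^{-\alpha(1-\widetilde\delta)}$; the Frostman exponent does not control the measure of a slab directly because a slab contains full unit-scale directions. The count of heavy slabs per $\tau$, $\lesssim R^{\alpha}\mu(\mathbb{R}^3)$ up to $R^{O(\widetilde\delta)}$, is still correct, but it follows solely from the heaviness lower bound and bounded overlap, not from the (false) Frostman upper bound on slabs. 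The place where $c_{\alpha}(\mu)\leq 1$ is actually used in the paper is in \eqref{muestimate}, after passing to the smoothed measure $\mu_j$ at scale $2^{-j}$, where one can legitimately apply the Frostman bound on balls of radius $2^{-j}$.
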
 

\begin{proof}[Proof of Lemma~\ref{energylemma}] It may be assumed that $\gamma$ is restricted to an interval of diameter smaller than $r=r(\gamma)$ on which Lemma~\ref{IBP} holds. 

Let $\phi_R$ be a non-negative bump function supported in $B_3(0, R^{-1})$ with $\int_{\mathbb{R}^3} \phi_R = 1$, defined by 
\[ \phi_R(x) = R^3 \phi(Rx), \]
for some fixed non-negative bump function $\phi$ supported in $B_3(0,1)$ with $\int_{\mathbb{R}^3} \phi = 1$. For any $\theta$ and any $D \in \mathbb{D}_{\theta}$, the 1-Lipschitz property of orthogonal projections implies that
\[ \left[ \rho_{\theta\sharp}(\mu \ast \phi_R) \right](2D) \geq (\rho_{\theta\sharp}\mu)(D), \]
where $2D$ is the interval with the same centre as $D$, but twice the radius. Moreover, 
\[ c_{\alpha}( \mu \ast \phi_R) \lesssim c_{\alpha}(\mu), \] 
so it suffices to prove \eqref{geometric} with $\mu \ast \phi_R$ in place of $\mu$. To simplify notation the new measure will not be relabelled, but it will be assumed throughout that $\mu$ is a non-negative Schwartz function, and that
\begin{equation} \label{schwartzdecay} \left\lvert \widehat{\mu}(\xi) \right\vert \leq C_{K} (R/ \xi)^{K}, \quad \xi \in \mathbb{R}^3, \end{equation}
for any positive integer $K$, where $C_{K}$ is a constant depending only on $K$ (i.e., $\widehat{\mu}$ is rapidly decaying outside $B(0,R)$). 

The conclusion of the lemma holds trivially for any $\epsilon >1$. Let $\epsilon_0$ be any positive real number which is strictly larger than the the infimum over all positive $\epsilon$ for which the conclusion of the lemma is true (this infimum is well-defined by the preceding remark). It suffices to prove that the lemma holds for any $\epsilon > \frac{2\epsilon_0}{3}$, so let such an $\epsilon$ be given. Let $\mu$, $\lambda$, and $\left\{ \mathbb{D}_{\theta} \right\}_{\theta \in I}$ be given. Let $R \geq 1$ and choose a non-negative integer $J$ such that $2^J \sim R^{\epsilon/1000}$. Let $\varepsilon >0$ be such that $\varepsilon \ll \epsilon - \frac{2\epsilon_0}{3}$. Choose $\widetilde{\delta}>0$ such that $\widetilde{\delta} \ll \min\{\delta_{\epsilon_0}, \varepsilon\}$, where $\delta_{\epsilon_0}$ is a $\delta$ corresponding to $\epsilon_0$ that satisfies \eqref{geometric}. 

Define the ``bad'' part of $\mu$ by
\begin{equation} \label{baddefn} \mu_b = \sum_{j \geq J} \sum_{\tau \in \Lambda_j} \sum_{S \in \mathbb{S}_{\tau,b} } M_S \mu, \end{equation}
where, for each $\tau \in \Lambda_j$, the set of ``bad'' slabs corresponding to $\tau$ is defined by 
\begin{equation} \label{badslabdefn} \mathbb{S}_{\tau, b} = \left\{ S \in \mathbb{S}_{\tau} : \mu(100S) \geq 2^{-j(1-\epsilon_0 )} \right\}, \end{equation}
where $100S$ is a slab with the same centre as $S$, but with side lengths scaled by a factor of 100. Define the ``good'' part of $\mu$ by 
\[ \mu_g = \mu-\mu_b. \] 
The rapid decay of $\widehat{\mu}$ outside $B(0,R)$ in \eqref{schwartzdecay} implies that the sum in \eqref{baddefn} converges (for example) in the Schwartz space $\mathcal{S}(\mathbb{R}^3)$. This implies that $\mu_b$ and $\mu_g$ are Schwartz functions, and are therefore finite complex measures. By the Cauchy-Schwarz inequality,
\begin{multline*} \int_{*} \left( \rho_{\theta \sharp} \mu \right)\left( \bigcup_{D \in \mathbb{D}_{\theta}} D \right) \, d\lambda(\theta)  \leq \int  \left\lVert \rho_{\theta \sharp} \mu_b \right\rVert_{L^1(\mathcal{H}^1) } \, d\lambda(\theta) \\ + \lambda(\mathbb{R})^{1/2}\sup_{\theta \in I} \mathcal{H}^1\left(  \bigcup_{D \in \mathbb{D}_{\theta}} D \right)^{1/2} \left(\int  \left\lVert \rho_{\theta \sharp} \mu_g \right\rVert_{L^2(\mathcal{H}^1) }^2 \, d\lambda(\theta) \right)^{1/2}. \end{multline*}
The contribution from the ``bad'' part will be bounded first. By the triangle inequality,
\begin{align} \notag \int  \left\lVert \rho_{\theta \sharp} \mu_b \right\rVert_{L^1(\mathcal{H}^1) } \, d\lambda(\theta)  &\leq \sum_{j \geq J}   \int \sum_{\tau \in \Lambda_j} \sum_{S \in \mathbb{S}_{\tau,b} }\left\lVert \rho_{\theta \sharp} M_S \mu \right\rVert_{L^1(\mathcal{H}^1) } \, d\lambda(\theta) \\
\label{mainpart} &= \sum_{j \geq J}  \int  \sum_{\substack{\tau \in \Lambda_j: \\ \left\lvert \theta_{\tau} - \theta \right\rvert \leq 2^{-j\left(1-\widetilde{\delta}\right)}}}  \sum_{S \in \mathbb{S}_{\tau,b} }\left\lVert \rho_{\theta \sharp} M_S \mu \right\rVert_{L^1(\mathcal{H}^1) } \, d\lambda(\theta)\\
\label{negligible} &\quad + \sum_{j \geq J}   \int  \sum_{\substack{\tau \in \Lambda_j: \\ \left\lvert \theta_{\tau} - \theta \right\rvert > 2^{-j\left(1-\widetilde{\delta}\right)}}} \sum_{S \in \mathbb{S}_{\tau,b} }\left\lVert \rho_{\theta \sharp} M_S \mu \right\rVert_{L^1(\mathcal{H}^1) } \, d\lambda(\theta). \end{align}
The sum in \eqref{negligible} is negligible by Lemma~\ref{IBP}, so it may be assumed that \eqref{mainpart} dominates. 
By the inequality
\[ \left\lVert \rho_{\theta \sharp} f \right\rVert_{L^1(\mathcal{H}^1)} \leq \left\lVert f \right\rVert_{L^1(\mathbb{R}^3)}, \]
followed by Lemma~\ref{MTf},
\begin{align*} \eqref{mainpart} &\leq  \sum_{j \geq J}  \int \sum_{\substack{\tau \in \Lambda_j: \\ \left\lvert \theta_{\tau} - \theta \right\rvert \leq 2^{-j\left(1-\widetilde{\delta}\right)}}}  \sum_{S \in \mathbb{S}_{\tau,b} }\left\lVert  M_S \mu \right\rVert_{L^1(\mathbb{R}^3) } \, d\lambda(\theta) \\
&\lesssim \sum_{j \geq J} 2^{3j \widetilde{\delta}} \int  \sum_{\substack{\tau \in \Lambda_j: \\ \left\lvert \theta_{\tau} - \theta \right\rvert \leq 2^{-j\left(1-\widetilde{\delta}\right)}}}  \sum_{S \in \mathbb{S}_{\tau,b} }\mu(2S) \, d\lambda(\theta), \end{align*}
where the negligible tail terms from Lemma~\ref{MTf} can be assumed to not dominate since the desired bound already follows in that case. The above satisfies 
\begin{equation} \label{nontail} \sum_{j \geq J} 2^{3j \widetilde{\delta}} \int  \sum_{\substack{\tau \in \Lambda_j: \\ \left\lvert \theta_{\tau} - \theta \right\rvert \leq 2^{-j\left(1-\widetilde{\delta}\right)}}}  \sum_{S \in \mathbb{S}_{\tau,b} }\mu(2S) \, d\lambda(\theta) \lesssim  \sum_{j \geq J} 2^{10j \widetilde{\delta}}\int \mu(B_j(\theta) ) \, d\lambda(\theta),  \end{equation}
where, for each $\theta \in I$ and each $j$, 
\[ B_j(\theta) =  \bigcup_{\tau \in \Lambda_j : \left\lvert \theta_{\tau} - \theta \right\rvert \leq 2^{-j\left(1-\widetilde{\delta}\right)}}  \bigcup_{S \in \mathbb{S}_{\tau,b} } 2S. \]
The inequality \eqref{nontail} used that for each $j$ and each $\theta \in I$, each of the slabs $2S$ in the union defining $B_j(\theta)$ intersects $\lesssim 2^{2j \widetilde{\delta}}$ of the others. Since the conclusion of the lemma holds for $\epsilon_0$, and since (by \eqref{badslabdefn}) the number of slabs in the union defining $B_j(\theta)$ is $\lesssim 2^{j \widetilde{\delta} } 2^{j(1-\epsilon_0) }$, it follows that for each $j \geq J$,
\begin{multline*} \int \mu(B_j(\theta) ) \, d\lambda(\theta) \leq \int \left( \rho_{\theta\sharp} \mu \right) \left(\bigcup_{\tau \in \Lambda_j : \left\lvert \theta_{\tau} - \theta \right\rvert \leq 2^{-j\left(1-\widetilde{\delta}\right)}} \bigcup_{ S \in \mathbb{S}_{\tau,b}} 2 \rho_{\theta}(S) \right) \, d\lambda(\theta)\\
 \lesssim 2^{j \left(-\delta_{\epsilon_0}/2 + 10 \widetilde{\delta} \right)} \mu(\mathbb{R}^3) \lambda(\mathbb{R})^{1/2} c_{\beta}(\lambda)^{1/2} . \end{multline*}
The set $B_j(\theta)$ is piecewise constant in $\theta$ over a partition of $I$ into Borel sets, so the integral above equals the lower integral as in the statement of the lemma. Since $\widetilde{\delta} \ll \delta_{\epsilon_0}$, summing the above inequality over $j$ yields
\begin{multline*} \eqref{mainpart} \lesssim2^{-(J \delta_{\epsilon_0})/100}  \mu(\mathbb{R}^3)\lambda(\mathbb{R})^{1/2} c_{\beta}(\lambda)^{1/2} \\ \lesssim R^{-(\epsilon \delta_{\epsilon_0})/10^5} \mu(\mathbb{R}^3)\lambda(\mathbb{R})^{1/2} c_{\beta}(\lambda)^{1/2}. \end{multline*}

It remains to bound the contribution from $\mu_g$.  By the assumptions in the lemma, 
\[  \sup_{\theta \in I} \mathcal{H}^1\left(  \bigcup_{D \in \mathbb{D}_{\theta}} D \right) \lesssim  R^{-\epsilon} \mu(\mathbb{R}^3). \]  
Since $\varepsilon \ll \epsilon - (2\epsilon_0)/3$, it suffices to prove that
\begin{equation} \label{sufficient1} \int \left\lVert \rho_{\theta \sharp} \mu_g \right\rVert_{L^2(\mathcal{H}^1) }^2 \, d\lambda(\theta) \lesssim \max\left\{R^{2\epsilon_0/3 + 1000 \varepsilon}, R^{\epsilon/2} \right\} c_{\beta}(\lambda)\mu(\mathbb{R}^3). \end{equation}
By Plancherel's theorem in 1 dimension,
\begin{equation} \label{plancherel1d} \int  \left\lVert \rho_{\theta \sharp} \mu_g \right\rVert_{L^2(\mathcal{H}^1) }^2\, d\lambda(\theta) = \int \int_{\mathbb{R}} \left\lvert \widehat{ \mu_g} ( t \gamma(\theta) ) \right\rvert^2 \, dt \, d\lambda(\theta). \end{equation} 
By symmetry and by summing a geometric series, using $\widetilde{\delta} \ll \varepsilon$, to prove \eqref{sufficient1} it will suffice to show that
\begin{equation} \label{sufficient2} \int \int_{2^{j-1}}^{2^j}\left\lvert \widehat{\mu_g}\left( t \gamma(\theta) \right) \right\rvert^2 \, dt \, d\lambda(\theta) \lesssim 2^{j\left( \frac{2\epsilon_0}{3} + 100\varepsilon +  O\left( \widetilde{\delta} \right) \right) } c_{\beta}(\lambda) \mu(\mathbb{R}^3),   \end{equation}
for any $j \geq 2J$. The contribution from the small frequencies ($j < 2J$) is controlled by the definition $2^{J} \sim R^{\epsilon/1000}$ of $J$ (this is the reason for the $R^{\epsilon/2}$ term in \eqref{sufficient1}), and the contribution from the large frequencies ($2^j \geq R^{1+\widetilde{\delta}}$) is controlled by the rapid decay of $\widehat{\mu}$ outside $B(0,R)$ (see \eqref{schwartzdecay}) instead of \eqref{sufficient2}. For each $\tau \in \Lambda$, define the set of ``good'' slabs corresponding to $\tau$ by
\[ \mathbb{S}_{\tau,g} = \mathbb{S}_{\tau} \setminus \mathbb{S}_{\tau, b}. \]
Fix a $j \geq 2J$ as in \eqref{sufficient2}. Then, apart from a negligible error term which can be assumed to not dominate,
\begin{multline} \label{pause29} \int \int_{2^{j-1}}^{2^j}\left\lvert \widehat{\mu_g}\left( t \gamma(\theta) \right) \right\rvert^2 \, dt \, d\lambda(\theta) \\
\lesssim \int  \int_{2^{j-1}}^{2^j}\left\lvert \sum_{\tau \in \bigcup_{\left\lvert j' -j\right\rvert \leq 2} \Lambda_{j'}} \sum_{S \in \mathbb{S}_{\tau,g} } \widehat{M_S\mu}\left( t \gamma(\theta) \right) \right\rvert^2 \, dt \, d\lambda(\theta). \end{multline}
Let $\Phi_j$ be a boundedly overlapping cover of the $\sim 1$ neighbourhood of the cone at distance $\sim 2^j$ from the origin by standard boxes (or planks) $\phi$ of dimensions $\sim 1 \times 2^{j/2} \times 2^j$, separated by a distance $\sim 2^{j}$ from the origin and tangent to the cone.  Let $\{ \psi_{\phi} \}_{\phi \in \Phi_j}$ be such that each $\psi_{\phi}$ is a bump function supported in $\phi$, and such that each $\Lambda_{j'}$ with $\lvert j-j'\rvert  \leq 2$ can be partitioned as $\Lambda_{j'} = \bigcup_{\phi \in \Phi_j} \Lambda_{j', \phi}$, where each $\tau \in \Lambda_{j', \phi}$ satisfies $1.1\tau \subseteq \phi$ and has the property that $\psi_{\phi} = 1$ on $1.1 \tau$. Then
\[  \sum_{\tau \in \bigcup_{\left\lvert j' -j\right\rvert \leq 2} \Lambda_{j'}} \sum_{S \in \mathbb{S}_{\tau,g} } \widehat{M_S\mu} = \sum_{\phi \in \Phi_j} \sum_{\tau \in \bigcup_{\left\lvert j' -j\right\rvert \leq 2} \Lambda_{j', \phi}} \sum_{S \in \mathbb{S}_{\tau,g} } \widehat{M_S\mu}. \]
Since the $\phi$'s and the $\tau$'s are boundedly overlapping, 
\begin{equation} \label{intermezzo} \eqref{pause29} \lesssim \sum_{\phi \in \Phi_j} \sum_{\tau \in \bigcup_{\left\lvert j' -j\right\rvert \leq 2} \Lambda_{j', \phi}} \int \int_{\mathbb{R}} \left\lvert  \sum_{S \in \mathbb{S}_{\tau,g} } \widehat{M_S\mu}\left( t \gamma(\theta) \right) \right\rvert^2 \, dt \, d\lambda(\theta), \end{equation}
where again the negligible error terms can be assumed to not dominate\footnote{From now on, such negligible error terms will mostly be ignored without comment when they can be assumed to not dominate.}, and the integration domain is now over $\mathbb{R}$ in order to apply Plancherel. If $\theta$ is fixed and then the application of the 1-dimensional Plancherel theorem (see \eqref{plancherel1d}) is reversed, then the sets $\rho_{\theta}(S)$ with $S \in \bigcup_{\tau \in \bigcup_{\lvert j'-j\rvert  \leq 2} \Lambda_{j'} : \left\lvert \theta_{\tau}-\theta\right\rvert \lesssim 2^{-j\left(1-\widetilde{\delta}\right)}} \mathbb{S}_{\tau}$  are $\lesssim 2^{2j\widetilde{\delta}}$ overlapping, and these are the only $S$ making a non-negligible contribution to the sum inside the integral in \eqref{intermezzo} (e.g.~by Lemma~\ref{IBP}). It follows that 
\begin{multline} \label{intermezzo3} \eqref{intermezzo} \lesssim \sum_{\phi \in \Phi_j} \sum_{\tau \in \bigcup_{\left\lvert j' -j\right\rvert \leq 2}  \Lambda_{j', \phi}}  \sum_{S \in \mathbb{S}_{\tau,g} } \int \int_{2^{j-10}}^{2^{j+10}}\left\lvert  \widehat{M_S\mu}\left( t \gamma(\theta) \right) \right\rvert^2 \, dt \, d\lambda(\theta). \end{multline}
 For each $\phi \in \Phi_j$, let $\mathbb{T}_{\phi}$ be a boundedly overlapping cover of $\mathbb{R}^3$ by planks of dimensions $2^{j\widetilde{\delta}} \times 2^{j \left( \widetilde{\delta} - 1/2 \right) } \times 2^{j\left( \widetilde{\delta}-1 \right)}$ dual to $\phi$, and let $\{\eta_T\}_{T \in \mathbb{T}_{\phi}}$ be a corresponding smooth partition of unity subordinate to this cover. For each $S$ and $T$, let 
\[ M_{S,T} \mu = \eta_S \left( \left[ \eta_T\left( \mu \ast \widecheck{\psi_{\phi(T) } } \right) \right] \ast \widecheck{\psi_{\tau(S) } } \right). \]
This can be written as 
\[ M_{S,T} \mu = M_S\left( M_T \mu\right), \]
where 
\[ M_T \nu := \eta_T\left( \nu \ast \widecheck{ \psi_{\phi(T) } } \right). \]
For any $S$, if $\tau(S) \in \Lambda_{j', \phi}$, then $\psi_{\phi} =1$ on $1.1 \tau$, and therefore
\[ M_S \mu =  \sum_{T \in \mathbb{T}_{\phi} } M_{S,T} \mu. \]
For each $\phi$ and dyadic number $\kappa$, let 
\[ \mathbb{T}_{\phi,\kappa} = \left\{ T \in \mathbb{T}_{\phi} : \kappa \leq \mu(4T) < 2\kappa \right\}. \]
 By Cauchy-Schwarz, and since there are $\lesssim \log(2^j)$ many dyadic values $\kappa$ contributing substantially\footnote{Strictly speaking, it is necessary to apply the complex case of Lemma~\ref{MTf}, and then Lemma~2.3 from \cite{harris24} (the analogue of Lemma~\ref{MTf} for the $M_T$), to rule out extremely small dyadic values.} to \eqref{intermezzo3}, there is a fixed dyadic number $\kappa = \kappa(j)$ such that
\begin{multline} \label{firstpigeon} \eqref{intermezzo3} \lesssim \\
\log(2^j)  \sum_{\phi \in \Phi_j} \sum_{\tau \in \bigcup_{\left\lvert j' -j\right\rvert \leq 2} \Lambda_{j', \phi}}  \sum_{S \in \mathbb{S}_{\tau,g} } \int \int_{2^{j-10}}^{2^{j+10}}\left\lvert \sum_{T \in \mathbb{T}_{\phi, \kappa}}  \widehat{M_{S,T} \mu}\left( t \gamma(\theta) \right) \right\rvert^2 \, dt \, d\lambda(\theta). \end{multline}

By further dyadic pigeonholing, there is a dyadic number $N = N(j)$ such that for every $\phi \in \Phi_j$ and $\tau  \in \bigcup_{\left\lvert j' -j\right\rvert \leq 2} \Lambda_{j', \phi}$, there is a subset $\mathbb{S}_{\tau,g,N} \subseteq \mathbb{S}_{\tau,g}$ such that every $S \in \mathbb{S}_{\tau,g,N}$ is such that $2S$ intersects a number $\# \in [N, 2N)$ many $T \in \mathbb{T}_{\phi, \kappa}$, and such that 
\begin{multline} \label{secondpigeon} \eqref{firstpigeon}
\lesssim \\
\log(2^j)^2  \sum_{\phi \in \Phi_j} \sum_{\tau \in \bigcup_{\left\lvert j' -j\right\rvert \leq 2} \Lambda_{j', \phi}}  \sum_{S \in \mathbb{S}_{\tau,g,N} } \int \int_{2^{j-10}}^{2^{j+10}}\left\lvert \sum_{T \in \mathbb{T}_{\phi, \kappa}}  \widehat{M_{S,T} \mu}\left( t \gamma(\theta) \right) \right\rvert^2 \, dt \, d\lambda(\theta). \end{multline}
In \eqref{secondpigeon}, if $T \in \mathbb{T}_{\phi, \kappa}$ does not intersect any set $2S$ with 
\[ S \in \bigcup_{\tau \in \bigcup_{\left\lvert j' -j\right\rvert \leq 2} \Lambda_{j', \phi}}\mathbb{S}_{\tau,g,N}, \]
 then, by the complex case of Lemma~\ref{MTf}, it makes negligible contribution to \eqref{secondpigeon}, so after removing some of the $T$ from each $\mathbb{T}_{\phi, \kappa}$, it may be assumed that for every $\phi \in \Phi_j$, every $T \in \mathbb{T}_{\phi, \kappa}$ intersects some set $2S$ with $S \in \mathbb{S}_{\tau,g,N}$ for some $\tau \in \bigcup_{\left\lvert j' -j\right\rvert \leq 2} \Lambda_{j', \phi}$. This refinement does not affect the pigeonholed property that for every $\phi \in \Phi_j$, every $S \in \mathbb{S}_{\tau,g,N}$ with $\tau \in \bigcup_{\left\lvert j' -j\right\rvert \leq 2} \Lambda_{j', \phi}$ is such that $2S$ intersects a number $\# \in [N, 2N)$ many $T \in \mathbb{T}_{\phi, \kappa}$. This implies the following:
\begin{equation} \label{goodplank1} \mu(4T) \lesssim 2^{-j(1-\epsilon_0)} N^{-1}, \qquad  \forall T \in \bigcup_{\phi \in \Phi_j } \mathbb{T}_{\phi,\kappa}. \end{equation}
To verify \eqref{goodplank1}, let $T \in \mathbb{T}_{\phi,\kappa}$ for some $\phi \in \Phi_j$. Then (by the refinement) there exists $S \in \mathbb{S}_{\tau,g,N}$ with $\tau \in \bigcup_{\left\lvert j' -j\right\rvert \leq 2} \Lambda_{j', \phi}$ and $2S \cap T \neq \emptyset$, so by definition of $\mathbb{S}_{\tau,g,N}$ there are $\gtrsim N$ many (boundedly overlapping) $T' \in \mathbb{T}_{\phi, \kappa}$ with $4T' \subseteq 100S$ and $\mu(4T') \sim \mu(4T)$. This relies on the geometric property that 
\[ T \cap 2S \neq \emptyset  \Rightarrow 4T \subseteq 100S \qquad \text{ for } \tau(S) \in \Lambda_{j', \phi(T) }. \]
Since the $T' \in \mathbb{T}_{\phi, \kappa}$ are boundedly overlapping, this yields
\[ N \mu(4T) \lesssim \mu(100S). \]
The inequality \eqref{goodplank1} then follows from the above together with the defining property $\mu(100S) < 2^{-j(1-\epsilon_0 )}$ of the ``good'' slabs from \eqref{badslabdefn}.

For each fixed $\phi \in \Phi_j$ (with $j$ still fixed), the innermost double sum from \eqref{secondpigeon}:
\begin{equation} \label{aboveintegral}  \sum_{\tau \in \bigcup_{\left\lvert j' -j\right\rvert \leq 2} \Lambda_{j', \phi}}  \sum_{S \in \mathbb{S}_{\tau,g,N} } \int \int_{2^{j-10}}^{2^{j+10}}\left\lvert \sum_{T \in \mathbb{T}_{\phi, \kappa}}  \widehat{M_{S,T} \mu}\left( t \gamma(\theta) \right) \right\rvert^2 \, dt \, d\lambda(\theta), \end{equation}
will be bounded in two different ways. 

For the first (simpler) bound, since $M_{S,T} \mu$ is essentially supported in a ball of radius $\approx 1$, the uncertainty principle implies that the integrand can be treated as essentially constant on balls of radius $\approx 1$ (this is a standard heuristic, but for a rigorous version of this argument, see \cite[p.~9]{harris24}). Using the fractal property of $\lambda$, the integral in the right-hand side of \eqref{aboveintegral} is bounded by $2^{-j\beta} c_{\beta}(\lambda)$ times the integral of the same function over $\mathbb{R}^3$:
\begin{equation} \label{r3integral} \eqref{aboveintegral} \lesssim c_{\beta}(\lambda)2^{-j\left(\beta-100\widetilde{\delta}\right)} \sum_{\tau \in \bigcup_{\left\lvert j' -j\right\rvert \leq 2} \Lambda_{j', \phi}}  \sum_{S \in \mathbb{S}_{\tau,g,N} }  \int_{\mathbb{R}^3} \left\lvert \sum_{T \in \mathbb{T}_{\phi, \kappa}}  \widehat{M_{S,T} \mu} \right\rvert^2 \, d\xi, \end{equation}
where the $2^{j\left(100\widetilde{\delta}\right)}$ factor incorporates the technical adjustments necessary to make the above argument rigorous.  By writing
\[ \sum_{T \in \mathbb{T}_{\phi, \kappa} } M_{S,T}\mu = M_S \left( \sum_{T \in \mathbb{T}_{\phi, \kappa} } M_T\mu\right), \]
and then applying Plancherel's theorem in $\mathbb{R}^3$ to \eqref{r3integral}, summing over $S$, then applying Plancherel again and summing over $\tau$, then the $T$,
\begin{equation} \label{uncertainty1} \eqref{aboveintegral} \lesssim  c_{\beta}(\lambda)2^{-j\left(\beta-100\widetilde{\delta}\right)}  \sum_{T \in \mathbb{T}_{\phi, \kappa}} \int_{\mathbb{R}^3} \left\lvert  \widehat{M_{T} \mu} \right\rvert^2 \, d\xi. \end{equation}
 This proves the first bound of \eqref{aboveintegral}.

For the second (more difficult) bound of \eqref{aboveintegral}, the idea is that if there were only one plank $T$ in the sum defining the integrand of \eqref{aboveintegral}, then, since $M_T\mu$ is just a wave packet, the function 
\[ \sum_{S: T \subseteq S} \left\lvert \widehat{M_{S,T}\mu}( t\gamma(\theta) )\right\rvert^2 \]
 can be treated as constant as $\theta$ varies over a larger arc of length $2^{-j/2}$ rather than $2^{-j}$, resulting in less loss obtained by removing the fractal measure $\lambda$. This would replace the factor $2^{-j\beta}$ in \eqref{uncertainty1} by the smaller factor $2^{-j\left( \frac{1+\beta}{2}\right)}$. Since there are $\sim N$ planks intersecting a slab instead of just one, forcing this heuristic to work results in a loss by a factor of $N$, so a suitable weighted geometric mean of this bound with the preceding bound \eqref{uncertainty1} will be used, in such a way that the loss of $N$ cancels with a later gain of $N^{-1}$ obtained by applying \eqref{goodplank1}. This heuristic will now be made precise. By the Cauchy-Schwarz inequality, 
\[ \eqref{aboveintegral} \lesssim \\
 \sum_{\tau \in \bigcup_{\left\lvert j' -j\right\rvert \leq 2} \Lambda_{j', \phi}}  \sum_{S \in \mathbb{S}_{\tau,g,N} } \sum_{T \in \mathbb{T}_{\phi, \kappa}} N  \int \int_{2^{j-10}}^{2^{j+10}}\left\lvert  \widehat{M_{S,T} \mu}\left( t \gamma(\theta) \right) \right\rvert^2 \, dt \, d\lambda(\theta), \]
where the contribution from those $T \in \mathbb{T}_{\phi,\kappa}$ with $T \cap 2S =\emptyset$ was ignored by using Lemma~\ref{MTf}.  The sum can be replaced by a sum over all $S \in \mathbb{S}_{\tau}$, since the properties of the ``good slabs'' are already contained in the planks $T$:
\begin{equation} \label{starfix} \eqref{aboveintegral} \lesssim \\
  \sum_{T \in \mathbb{T}_{\phi, \kappa}} N  \sum_{\tau \in \bigcup_{\left\lvert j' -j\right\rvert \leq 2} \Lambda_{j', \phi}}  \sum_{S \in \mathbb{S}_{\tau} } \int \int_{2^{j-10}}^{2^{j+10}}\left\lvert  \widehat{M_{S,T} \mu}\left( t \gamma(\theta) \right) \right\rvert^2 \, dt \, d\lambda(\theta). \end{equation}
	
	The sums over $\tau$ and $S$ in \eqref{starfix} can be moved inside the integral. The domain of $\theta$ contributing non-negligibly to \eqref{starfix} is (by \eqref{IBP}) an arc of length $\lesssim 2^{-j/2}$, and for each $t\gamma(\theta)$ there are $\lesssim 1$ pairs $(\tau,S)$ contributing substantially to \eqref{starfix} (by Lemma~\ref{MTf}). For any $S$ and $T$, $\left\lVert \widehat{M_{S,T} } \right\rVert_{\infty} \lesssim \mathcal{H}^3(T)^{1/2} \left\lVert M_T \mu \right\rVert_2$ by the uncertainty principle (see e.g.~\cite[Proposition~5.4]{wolff}). By the Frostman condition on $\lambda$, this yields
\begin{equation} \label{uncertainty2} \eqref{aboveintegral} \lesssim c_{\beta}(\lambda) N 2^{-j\left(  \frac{1+\beta}{2} -100\widetilde{\delta}\right)} \sum_{T \in \mathbb{T}_{\phi, \kappa}}   \int_{\mathbb{R}^3} \left\lvert  \widehat{M_{T} \mu} \right\rvert^2 \, d\xi. \end{equation}
This is the second bound on \eqref{aboveintegral}. 

By taking a weighted geometric mean of \eqref{uncertainty1} and \eqref{uncertainty2}, 
\begin{multline} \label{uncertainty3} \eqref{aboveintegral} \lesssim \eqref{uncertainty1}^{1/3} \eqref{uncertainty2}^{2/3} \\
\lesssim 
 c_{\beta}(\lambda) N^{2/3} 2^{-j\left( \frac{1+2\beta}{3} -100\widetilde{\delta} \right)} \sum_{T \in \mathbb{T}_{\phi, \kappa}}   \int_{\mathbb{R}^3} \left\lvert  \widehat{M_{T} \mu} \right\rvert^2 \, d\xi. \end{multline}
By substituting \eqref{uncertainty3} into \eqref{secondpigeon}, then \eqref{firstpigeon}, then \eqref{intermezzo3}, then \eqref{intermezzo}, and then \eqref{pause29}, to prove \eqref{sufficient2} it suffices to show that for any $j \geq 2J$, with $N= N(j)$,
\begin{equation} \label{enough}  \sum_{\phi \in \Phi_{j} } \sum_{T \in \mathbb{T}_{\phi, \kappa}} \int_{\mathbb{R}^3} \left\lvert M_{T} \mu\right\rvert^2 \, dx \lesssim N^{-2/3} 2^{j\left( \frac{1+2\beta}{3} + \frac{2 \epsilon_0}{3} + 200 \varepsilon \right) } \mu(\mathbb{R}^3). \end{equation}
The remainder of the proof will be devoted to verifying \eqref{enough}. Since the small cap wave packet decomposition has been converted to a standard wave packet decomposition, the application of refined decoupling is nearly identical to proof of (2.18) in \cite{harris24}, but the details will be included for completeness, and to show the key gain over \cite{harris24} coming from the extra $N^{-1}$ factor in \eqref{goodplank1}. Let
\[ \mathbb{T}_{j,g} = \bigcup_{\phi \in \Phi_j} \mathbb{T}_{\phi,\kappa}. \]
For each $T \in \mathbb{T}_{j,g}$, let 
\begin{equation} \label{fTdefn}  f_T :=  \left[\eta_{T} M_{T} \mu \right] \ast \widecheck{\psi_{\phi(T)}}. \end{equation}
For each $T$, by unpacking the definition $M_T\mu = \eta_T \left( \mu \ast \widecheck{\psi_{\phi(T)}} \right)$ of one of the $M_T\mu$'s in \eqref{enough}, the left-hand side of \eqref{enough} is equal to
\begin{equation} \label{cauchymeasure}  \int \sum_{T \in \mathbb{T}_{j,g}} f_T \, d\mu. \end{equation}
By the Cauchy-Schwarz inequality with respect to $\mu$, 
\begin{equation} \label{mucauchyschwarz} \eqref{cauchymeasure} \leq \left(\int \left\lvert \sum_{T \in \mathbb{T}_{j,g}} f_T \right\rvert^2 \, d\mu \right)^{1/2}\cdot \mu(\mathbb{R}^3)^{1/2}. \end{equation}
By the uncertainty principle (since each $f_T$ is Fourier supported in a ball of radius $\sim 2^j$),
\[  \left(\int  \left\lvert \sum_{T \in \mathbb{T}_{j,g}} f_T \right\rvert^2 \, d\mu \right)^{1/2} \lesssim  \left(\int  \left\lvert \sum_{T \in \mathbb{T}_{j,g}} f_T \right\rvert^2 \, d\mu_j\right)^{1/2}, \]
where $\mu_j = \mu \ast \phi_j$ and $\phi_j(x) = \frac{2^{3j}}{1+\left\lvert 2^j x\right\rvert^{N_1}}$ , where $N _1\sim 1000/ \widetilde{\delta}^2$. Here $\mu_j$ should be thought of as the smoothed out version of $\mu$ at scale $2^{-j}$ and constant on balls of radius $2^{-j}$. By dyadic pigeonholing (with $2 \leq p\leq 6$ to be chosen), there exists a collection $\mathbb{W} \subseteq \mathbb{T}_{j,g}$ with $\left\lVert f_T\right\rVert_p$ constant over $T \in \mathbb{W}$ up to a factor of 2, and a union $Y$ of disjoint $2^{-j}$-balls $Q$ such that each $Q$ intersects $\sim M$ planks $2T$ with $T \in \mathbb{W}$ for some dyadic number $M$, and such that (ignoring negligible error terms which can be assumed to not dominate)
\[  \left(\int \left\lvert \sum_{T \in \mathbb{T}_{j,g}} f_T \right\rvert^2 \, d\mu_j\right)^{1/2} \lesssim j^{10} \left(\int_{Y} \left\lvert \sum_{T \in \mathbb{W}} f_T \right\rvert^2 \, d\mu_j\right)^{1/2}. \]
By Hölder's inequality with respect to Lebesgue measure,
\begin{equation} \label{pause} \left(\int_{Y} \left\lvert \sum_{T \in \mathbb{W}} f_T \right\rvert^2 \, d\mu_j\right)^{1/2} \leq \left\lVert  \sum_{T \in \mathbb{W}} f_T \right\rVert_{L^p(Y)} \left( \int_Y \mu_j^{\frac{p}{p-2} }\right)^{\frac{p-2}{2p} }, \end{equation}
By the dimension condition $c_{\alpha}(\mu) \leq 1$ on $\mu$, the definition of $Y$, and the property \eqref{goodplank1}, 
\begin{align} \label{muestimate} \int_Y \mu_j^{\frac{p}{p-2} } &\lesssim 2^{\frac{ 2j(3-\alpha)}{p-2} } \int_Y \mu_j \\
\notag &\leq 2^{\frac{ 2j(3-\alpha)}{p-2} } \sum_{Q \subseteq Y} \int_Q \mu_j \\
\notag &\lesssim  2^{\frac{ 2j(3-\alpha)}{p-2} } \left( \frac{1}{M} \right) \sum_{Q \subseteq Y} \sum_{\substack{T \in \mathbb{W} \\ 2T \cap Q \neq \emptyset } } \int_{Q \cap 3T} \mu_j \\
\notag &\lesssim  2^{\frac{ 2j(3-\alpha)}{p-2} }  \left( \frac{1}{M} \right)  \sum_{T \in \mathbb{W}} \int_{3T} \mu_j \\
\notag &\lesssim  2^{\frac{ 2j(3-\alpha)}{p-2} }  \left( \frac{1}{M} \right) \sum_{T \in \mathbb{W}} \mu(4T) + 2^{-100j} \\
\notag &\lesssim 2^{\frac{2j(3-\alpha)}{p-2} - j\left(1-\epsilon_0 \right)} \left( \frac{ \left\lvert \mathbb{W} \right\rvert }{M} \right) N^{-1}. \end{align}
This bounds the second factor in \eqref{pause}, so it remains to bound the first factor. 

By rescaling by $2^j$, applying the refined decoupling inequality (see Theorem~\ref{refineddecouplingtheorem}), and then rescaling back,
\begin{equation} \label{afterdecoupling} \left\lVert \sum_{T \in \mathbb{W}} f_T \right\rVert_{L^p(Y)} \lesssim 2^{j \varepsilon} \left( \frac{M}{\left\lvert \mathbb{W} \right\rvert} \right)^{\frac{1}{2} - \frac{1}{p} } \left( \sum_{T \in \mathbb{W} } \left\lVert f_T\right\rVert_p^2 \right)^{1/2}. \end{equation}
By recalling the definition \eqref{fTdefn} of the $f_T$'s and applying the Hausdorff-Young inequality, followed by Hölder's inequality, 
\[ \left\lVert f_T\right\rVert_p \lesssim \left\lVert M_T \mu\right\rVert_2 2^{\frac{3j}{2} \left( \frac{1}{2} - \frac{1}{p}\right)}. \]
Substituting into \eqref{afterdecoupling} gives
\begin{equation} \label{refineddecoupling} \left\lVert \sum_{T \in \mathbb{W}} f_T \right\rVert_{L^p(Y)} \lesssim 2^{\frac{3j}{2} \left( \frac{1}{2} - \frac{1}{p} + \varepsilon \right)} \left( \frac{M}{\left\lvert \mathbb{W} \right\rvert} \right)^{\frac{1}{2} - \frac{1}{p} } \left( \sum_{T \in \mathbb{T}_{j,g} } \left\lVert M_{T} \mu\right\rVert_2^2 \right)^{1/2}. \end{equation}
Substituting \eqref{refineddecoupling} and \eqref{muestimate} into \eqref{pause} and then \eqref{mucauchyschwarz} gives (after some algebra)
\begin{multline*} \sum_{T \in \mathbb{T}_{j,g} } \left\lVert M_{T} \mu\right\rVert_2^2 \\
\lesssim N^{-\frac{p-2}{2p}} 2^{j \left[ \frac{5-2\alpha}{2p} + \frac{1}{4} +\frac{\epsilon_0(p-2)}{2p} + \frac{3\varepsilon}{2}\right]} \left( \sum_{T \in \mathbb{T}_{j,g} } \left\lVert M_{T} \mu\right\rVert_2^2 \right)^{1/2} \mu(\mathbb{R}^3)^{1/2}. \end{multline*}
This gives, by cancelling the common factor,
\begin{equation} \label{triplestar} \sum_{T \in \mathbb{T}_{j,g} } \left\lVert M_{T} \mu\right\rVert_2^2 \lesssim  N^{-\frac{p-2}{p}} 2^{j \left[ \frac{5-2\alpha}{p} + \frac{1}{2} +\frac{\epsilon_0(p-2)}{p} + 3\varepsilon  \right]}\mu(\mathbb{R}^3). \end{equation}
By taking $p=6$ and recalling that $\alpha = 3-2\beta$, this simplifies to
\[ \sum_{T \in \mathbb{T}_{j,g} } \int_{\mathbb{R}^3} \left\lvert M_{T} \mu\right\rvert^2 \lesssim N^{-2/3} 2^{j\left( \frac{1+2\beta}{3}+ \frac{2\epsilon_0}{3} + 3\varepsilon \right)}\mu(\mathbb{R}^3), \]
which verifies \eqref{enough} and therefore concludes the proof of Lemma~\ref{energylemma}.   
\end{proof} 

The proof of Theorem~\ref{projmeasure} will be very similar to the proof of the Lemma~\ref{energylemma}.
\begin{proof}[Proof of Theorem~\ref{projmeasure}]  As in the proof of Lemma~\ref{energylemma}, it may be assumed that $\gamma$ is restricted to an interval of diameter at most $r=r(\gamma)$ on which Lemma~\ref{IBP} holds.  It may also be assumed that $\alpha \leq 3$, $c_{\alpha}(\mu) \leq 1$ and that $\mu$ has support in the unit ball. Since the exceptional set is Borel measurable (see \cite{harris24}), by Frostman's lemma it suffices to prove that for any $\beta >(3-\alpha)/2$, $\rho_{\theta\sharp} \mu \ll \mathcal{H}^1$ for $\lambda$-a.e.~$\theta \in I$, whenever $\lambda$ is a measure on $I$ with $c_{\beta}(\lambda) < \infty$. Therefore, let $\beta$ be such that $\beta > (3-\alpha)/2$, and let $\lambda$ be a Borel measure supported on $I$ with $c_{\beta}(\lambda) < \infty$. Let $\epsilon>0$ be such that $\epsilon \ll \beta - \frac{3-\alpha}{2}$. Choose $\widetilde{\delta}>0$ such that $\widetilde{\delta} \ll \min\left\{ \epsilon, \delta_{\epsilon} \right\}$, where $\delta_{\epsilon}$ is an exponent corresponding to $\epsilon$ from Lemma~\ref{energylemma}. Using Definition~\ref{decompdefn}, define $\mu_b$ by 
\[  \mu_b = \sum_{j \geq 1} \sum_{\tau \in \Lambda_j} \sum_{S \in \mathbb{S}_{\tau,b} } M_S \mu, \]
where, for each $j \geq 1$ and $\tau \in \Lambda_j$, the set of ``bad'' slabs corresponding to $\tau$ is defined by 
\[ \mathbb{S}_{\tau, b} = \left\{ S \in \mathbb{S}_{\tau} : \mu(100S) \geq 2^{-j(1-\epsilon)} \right\}. \]
For $\lambda$-a.e.~$\theta \in I$,
\begin{equation} \label{pushbaddefn} \rho_{\theta \sharp}  \mu_b  := \sum_{j \geq 1} \sum_{\tau \in \Lambda_j} \sum_{S \in \mathbb{S}_{\tau,b} } \rho_{\theta \sharp} M_S\mu, \end{equation}
where, for $\lambda$-a.e.~$\theta \in I$, the series will be shown to be absolutely convergent in $L^1(\mathcal{H}^1)$. The $\lambda$-a.e.~absolute convergence of \eqref{pushbaddefn} in $L^1(\mathcal{H}^1)$ will imply that $\rho_{\theta\sharp} \mu_b \in L^1(\mathcal{H}^1)$ for $\lambda$-a.e.~$\theta \in I$ and that the series is $\lambda$-a.e.~well-defined as an $L^1(\mathcal{H}^1)$ limit. Define 
\[ \rho_{\theta\sharp} \mu_g = \rho_{\theta\sharp} \mu - \rho_{\theta \sharp} \mu_b, \]
for each $\lambda \in I$ such that the sum defining $\rho_{\theta\sharp}\mu_b$ converges in $L^1(\mathcal{H}^1)$. It will be shown that 
\[ \rho_{\theta \sharp} \mu_g \in L^2(\mathcal{H}^1), \]
for $\lambda$-a.e.~$\theta \in I$. Together with $\rho_{\theta\sharp} \mu_b \in L^1(\mathcal{H}^1)$ for $\lambda$-a.e.~$\theta \in I$, this will imply that $\rho_{\theta \sharp}\mu \in L^1(\mathcal{H}^1)$ (or equivalently $\rho_{\theta \sharp}\mu \ll \mathcal{H}^1$) for $\lambda$-a.e.~$\theta \in I$.

It will first be shown that
\begin{equation} \label{pause333} \int \sum_{j \geq 1} \sum_{\tau \in \Lambda_j} \sum_{S \in \mathbb{S}_{\tau,b} } \left\lVert \rho_{\theta \sharp}  M_S\mu \right\rVert_{L^1(\mathcal{H}^1 ) } \, d\lambda(\theta) \lesssim c_{\beta}(\lambda)^{1/2} \lambda(\mathbb{R})^{1/2} \mu(\mathbb{R}^3). \end{equation}
The proof of this is similar to the proof of Lemma~\ref{energylemma}, but the details will be sketched. The left-hand side of \eqref{pause333} can be written as
\begin{align}  \notag &\sum_{j \geq 1}   \int \sum_{\tau \in \Lambda_j} \sum_{S \in \mathbb{S}_{\tau,b} }\left\lVert \rho_{\theta \sharp}  M_S\mu\right\rVert_{L^1(\mathcal{H}^1) } \, d\lambda(\theta)  \\
\label{mainpart2} &\quad = \sum_{j \geq 1}  \int  \sum_{\substack{\tau \in \Lambda_j: \\ \left\lvert \theta_{\tau} - \theta \right\rvert \leq 2^{-j\left(1-\widetilde{\delta}\right)}}}  \sum_{S \in \mathbb{S}_{\tau,b} }\left\lVert \rho_{\theta \sharp} M_S\mu\right\rVert_{L^1(\mathcal{H}^1) } \, d\lambda(\theta)\\
\label{negligible2} &\qquad + \sum_{j \geq 1}   \int  \sum_{\substack{\tau \in \Lambda_j: \\ \left\lvert \theta_{\tau} - \theta \right\rvert > 2^{-j\left(1-\widetilde{\delta}\right)}}} \sum_{S \in \mathbb{S}_{\tau,b} }\left\lVert \rho_{\theta \sharp} M_S\mu \right\rVert_{L^1(\mathcal{H}^1) } \, d\lambda(\theta). \end{align}
By Lemma~\ref{IBP},
\[ \eqref{negligible2}  \lesssim \mu(\mathbb{R}^3)\lambda(\mathbb{R}).\] 
By Lemma~\ref{MTf},
\begin{align*} \eqref{mainpart2} &\leq  \sum_{j \geq 1}  \int \sum_{\substack{\tau \in \Lambda_j: \\ \left\lvert \theta_{\tau} - \theta \right\rvert \leq 2^{-j\left(1-\widetilde{\delta}\right)}}}  \sum_{S \in \mathbb{S}_{\tau,b} }\left\lVert  M_S\mu \right\rVert_{L^1(\mathbb{R}^3) } \, d\lambda(\theta) \\
&\lesssim \mu(\mathbb{R}^3)\lambda(\mathbb{R}) + \sum_{j \geq 1} 2^{3j \widetilde{\delta}} \int  \sum_{\substack{\tau \in \Lambda_j: \\ \left\lvert \theta_{\tau} - \theta \right\rvert \leq 2^{-j\left(1-\widetilde{\delta}\right)}}}  \sum_{S \in \mathbb{S}_{\tau,b} }\mu(2S) \, d\lambda(\theta). \end{align*}
As in the proof of Lemma~\ref{energylemma}, the non-tail term satisfies 
\[ \sum_{j \geq 1} 2^{3j \widetilde{\delta}} \int  \sum_{\substack{\tau \in \Lambda_j: \\ \left\lvert \theta_{\tau} - \theta \right\rvert \leq 2^{-j\left(1-\widetilde{\delta}\right)}}}  \sum_{S \in \mathbb{S}_{\tau,b} }\mu(2S) \, d\lambda(\theta) \lesssim  \sum_{j \geq 1} 2^{10j \widetilde{\delta}}\int \mu(B_j(\theta) ) \, d\lambda(\theta),  \]
where, for each $\theta \in I$ and each $j$, 
\[ B_j(\theta) =  \bigcup_{\tau \in \Lambda_j:  \left\lvert \theta_{\tau} - \theta \right\rvert \leq 2^{-j\left(1-\widetilde{\delta}\right)}}  \bigcup_{S \in \mathbb{S}_{\tau,b} } 2S. \]
For any $\theta$, the set of possible $S$ occurring above has cardinality $\lesssim \mu(\mathbb{R}^3)2^{j(1-\epsilon)} 2^{2j \widetilde{\delta}}$; by disjointness and the definition of the ``bad'' slabs. By Lemma~\ref{energylemma}, for each $j \geq 1$,
\begin{multline*} \int \mu(B_j(\theta) ) \, d\lambda(\theta) \leq \int \left( \rho_{\theta\sharp} \mu \right) \left(\bigcup_{\tau \in \Lambda_j : \left\lvert \theta_{\tau} - \theta \right\rvert \leq 2^{-j\left(1-\widetilde{\delta}\right)}} \bigcup_{ S \in \mathbb{S}_{\tau,b}} 2 \rho_{\theta}(S) \right)  \, d\lambda(\theta) \\
\lesssim  c_{\beta}(\lambda)^{1/2} \lambda(\mathbb{R})^{1/2} 2^{j \left(-\delta_{\epsilon}/2 + 10 \widetilde{\delta} \right)} \mu(\mathbb{R}^3). \end{multline*} Since $\widetilde{\delta} \ll \delta_{\epsilon}$, summing the above inequality over $j$ gives
\[ \eqref{mainpart2} \lesssim  c_{\beta}(\lambda)^{1/2} \lambda(\mathbb{R})^{1/2}\mu(\mathbb{R}^3). \]

It remains to show that $\rho_{\theta\sharp}  \mu_g \in L^2(\mathcal{H}^1)$ for $\lambda$-a.e.~$ \theta \in I$. To prove this, by Plancherel's theorem in 1 dimension it suffices to show that
\begin{equation} \label{quadriplestar} \int  \int_{\mathbb{R}} \left\lvert \widehat{\mu_g} \left( t \gamma(\theta) \right) \right\rvert^2 \, dt \,  d\lambda(\theta) \lesssim c_{\beta}(\lambda) \mu(\mathbb{R}^3). \end{equation}
By symmetry, it is enough to show that for any $j \geq 1$,  
\[ \int  \int_{2^{j-1}}^{2^j}\left\lvert \widehat{\mu_g}\left( t \gamma(\theta) \right) \right\rvert^2 \, dt \,  d\lambda(\theta) \lesssim 2^{-j\epsilon} c_{\beta}(\lambda) \mu(\mathbb{R}^3).   \]
By similar reasoning justifying that \eqref{enough} suffices in Lemma~\ref{energylemma}, it suffices to show that for any $j \geq 1$,
\begin{equation} \label{enough2} \sum_{T \in \mathbb{T}_{j, g}} \int_{\mathbb{R}^3} \left\lvert M_{T} \mu\right\rvert^2 \, dx \lesssim N^{-2/3} 2^{j\left( \frac{1+2\beta}{3}- 10 \epsilon \right) } \mu(\mathbb{R}^3), \end{equation}
the main difference from \eqref{enough} being the negative sign in front of the $\epsilon$. Here $\{\phi\}_{\phi}$ is a partition of the cone at distance $\sim 2^j$ from the origin into standard boxes of dimensions $\sim 2^j \times 2^{j/2} \times 1$, and $\mathbb{T}_{j,g} = \bigcup_{\phi \in \Theta_j } \bigcup \mathbb{T}_{\phi, \kappa} $. The parameter $\kappa$ is a fixed dyadic number, and for each $\phi$, the set $\mathbb{T}_{\phi, \kappa}$ is a subset of the cover $\mathbb{T}_{\phi}$ of $\mathbb{R}^3$ by a boundedly overlapping set of planks of dimensions $2^{j\left(-1+\widetilde{\delta} \right) } \times 2^{j\left( -1/2 + \widetilde{\delta} \right) } \times 2^{j\widetilde{\delta} }$ dual to $\phi$, with $\mu(T) \sim \kappa$ for all $T \in \bigcup_{\phi} \mathbb{T}_{\phi, \kappa}$, and each $T$ satisfies \eqref{goodplank1} with $N= N(j)$, but with $\epsilon_0$ replaced by $\epsilon$. 

By a similar argument to the justification of \eqref{triplestar} in the proof of Lemma~\ref{energylemma},
\[ \sum_{T \in \mathbb{T}_{j,g}} \int_{\mathbb{R}^3} \left\lvert M_{T} \mu\right\rvert^2 \lesssim  N^{-2/3} 2^{j \left[ \frac{5-2\alpha}{p} + \frac{1}{2} +100\epsilon \right]}, \]
where $p=6$. Since $\alpha >3-2\beta$, $p=6$ and $\epsilon \ll \beta - \frac{3-\alpha}{2}$, this implies that
\[ \sum_{T \in \mathbb{T}_{j,g}} \int_{\mathbb{R}^3} \left\lvert M_T \mu\right\rvert^2 \lesssim  N^{-2/3}2^{j\left(\frac{1+2\beta}{3}-10\epsilon\right)}, \]
which verifies \eqref{enough2} finishes the proof of Theorem~\ref{projmeasure}. \end{proof}

Only a very brief sketch of the proof of Theorem~\ref{Lptheorem} will be given, since it is very similar to the proof of Theorem~\ref{projmeasure}. 

\begin{proof}[Proof of Theorem~\ref{Lptheorem}]  The notation and setup is the same as in the proof of Theorem~\ref{projmeasure}. By scaling it may be assumed that $c_{\alpha}(\mu) = 1$. One simplification is that, since $p>1$ (where $p$ will be chosen in a moment), the dual of $L^p$ has a dense subspace of continuous functions, which means that, by approximation and a simple duality argument, it suffices to prove Theorem~\ref{Lptheorem} for $\mu$ a positive smooth function supported in the unit ball. This simplifies the convergence (e.g.~in the Schwartz space) in the sum defining $\mu_b$, and implies the trivial identity
\begin{equation} \label{trivial} \int \left\lVert \rho_{\theta \sharp} \mu \right\rVert_{L^1(\mathcal{H}^1) } \, d\lambda(\theta)  = \lambda(\mathbb{R}) \mu(\mathbb{R}^3) \lesssim  \lambda(\mathbb{R})^{1/2} c_{\beta}(\lambda)^{1/2} \mu(\mathbb{R}^3) . \end{equation}
Let $\sigma>0$ be a small parameter to be chosen. By the triangle inequality followed by Hölder's inequality, to prove that 
\begin{equation} \label{pause3335} \int  \left\lVert  \rho_{\theta \sharp}  \mu_b \right\rVert_{L^p(\mathcal{H}^1 ) }^p \, d\lambda(\theta) \lesssim \lambda(\mathbb{R})^{1/2} c_{\beta}(\lambda)^{1/2}  \mu(\mathbb{R}^3), \end{equation}
 it suffices to show that
\begin{equation} \label{pause3334} \int \sum_{j \geq 1}  \left\lVert \sum_{\tau \in \Lambda_j} \sum_{S \in \mathbb{S}_{\tau,b} } \rho_{\theta \sharp}  M_S\mu \right\rVert_{L^p(\mathcal{H}^1 ) }^p 2^{\sigma j } \, d\lambda(\theta) \lesssim  \lambda(\mathbb{R})^{1/2} c_{\beta}(\lambda)^{1/2} \mu(\mathbb{R}^3). \end{equation}
The first step is the (blunt) inequality
\begin{multline} \label{veryblunt} \left\lVert \sum_{\tau \in \Lambda_j} \sum_{S \in \mathbb{S}_{\tau,b} }  \rho_{\theta \sharp}  M_S\mu\right\rVert_{L^p(\mathcal{H}^1 ) }^p \lesssim \left\lVert \sum_{\tau \in \Lambda_j} \sum_{S \in \mathbb{S}_{\tau,b} }  \rho_{\theta \sharp}  M_S\mu \right\rVert_{L^1(\mathcal{H}^1 ) } 2^{10^{10}j(p-1)} \\
\leq \sum_{\tau \in \Lambda_j} \sum_{S \in \mathbb{S}_{\tau,b} }\left\lVert   \rho_{\theta \sharp}  M_S\mu \right\rVert_{L^1(\mathcal{H}^1 ) } 2^{10^{10}j(p-1)}. \end{multline}
 The proof of \eqref{pause333} in the proof of Theorem~\ref{projmeasure} involves a decaying geometric series in $j$, so the loss in \eqref{veryblunt} can be absorbed by taking $p$ very close to 1, and the decaying geometric series can also absorb the loss of $2^{\sigma j }$ in \eqref{pause3334} by taking $\sigma$ very small. This proves \eqref{pause3334} and therefore \eqref{pause3335}. Moreover, the same argument as in the proof of Theorem~\ref{projmeasure} also shows that
\[ \int \left\lVert \rho_{\theta \sharp} \mu_b \right\rVert_{L^1(\mathcal{H}^1) } \, d\lambda(\theta)   \lesssim  \lambda(\mathbb{R})^{1/2} c_{\beta}(\lambda)^{1/2} \mu(\mathbb{R}^3). \]
Combining with \eqref{trivial}, using the triangle inequality, yields 
\begin{equation} \label{trivialgood} \int \left\lVert \rho_{\theta \sharp} \mu_g \right\rVert_{L^1(\mathcal{H}^1) } \, d\lambda(\theta)  \lesssim  \lambda(\mathbb{R})^{1/2} c_{\beta}(\lambda)^{1/2} \mu(\mathbb{R}^3). \end{equation}
Since there is already a bound on the $L^2$ norm of the ``good'' part from the proof of Theorem~\ref{projmeasure} (see \eqref{quadriplestar}):
\[ \int \left\lVert \rho_{\theta \sharp} \mu_g \right\rVert_{L^2(\mathcal{H}^1) }^2 \, d\lambda(\theta)  \lesssim   c_{\beta}(\lambda) \mu(\mathbb{R}^3), \]
interpolating with \eqref{trivialgood} using Hölder's inequality yields
\[ \int \left\lVert \rho_{\theta \sharp} \mu_g \right\rVert_{L^p(\mathcal{H}^1) }^p \, d\lambda(\theta)  \lesssim  \lambda(\mathbb{R})^{\frac{2-p}{2}} c_{\beta}(\lambda)^{\frac{p}{2}} \mu(\mathbb{R}^3). \]
Combining with \eqref{pause3335}, using that $\lambda(\mathbb{R})^{1/2} c_{\beta}(\lambda)^{1/2} \lesssim \lambda(\mathbb{R})^{\frac{2-p}{2}} c_{\beta}(\lambda)^{\frac{p}{2}}$ for $1 \leq p \leq 2$, gives 
\[ \int \left\lVert \rho_{\theta \sharp} \mu \right\rVert_{L^p(\mathcal{H}^1) }^p \, d\lambda(\theta)  \lesssim  \lambda(\mathbb{R})^{\frac{2-p}{2}} c_{\beta}(\lambda)^{\frac{p}{2}} \mu(\mathbb{R}^3). \]
This finishes the proof of Theorem~\ref{Lptheorem}. \end{proof}

\appendix
\section{Refined decoupling}

The inequality in Theorem~\ref{refineddecouplingtheorem} below is the refined decoupling inequality for cones, from \cite{GGGHMW}. Although the proof from \cite{GGGHMW} appears rather technical, the idea is the same as in the parabola case in \cite{GIOW}. The proof is by induction on scales, and the theorem at one scale follows from the theorem at a larger scale by the standard decoupling theorem and Lorentz rescaling (the analogue of parabolic rescaling). 
\begin{theorem}[{\cite[Theorem~9]{GGGHMW}}] \label{refineddecouplingtheorem} Let $I$ be a compact interval, and let $\gamma :I \to S^2$ be a $C^2$ unit speed curve with $\det(\gamma, \gamma', \gamma'' )$ nonvanishing on $I$. Then if $c>0$ is sufficiently small (depending only on $\gamma$), then for any $\epsilon >0$, there exists $\delta_0>0$ such that the following holds for all $0 < \delta < \delta_0$, and any $R \geq 1$. Let $\Theta_R$ be a maximal $c R^{-1/2}$-separated subset of $I$, and for each $\theta \in \Theta_R$, let
\begin{multline*} \tau(\theta) := \\
\left\{ \lambda_1 \gamma(\theta) + \lambda_2 \gamma'(\theta) + \lambda_3 (\gamma \times \gamma')(\theta) : 1/2 \leq \lambda_1 \leq 1, \lvert \lambda_2 \rvert \leq R^{-1/2}, \lvert \lambda_3 \rvert \leq R^{-1} \right\}. \end{multline*}
For each $\tau = \tau(\theta)$, let $\mathbb{T}_{\tau}$ be a  $\sim 1$-overlapping cover of $\mathbb{R}^3$ by translates of 
\[ \left\{ \lambda_1 \gamma(\theta) + \lambda_2 \gamma'(\theta) + \lambda_3 (\gamma \times \gamma')(\theta) : \lvert \lambda_1 \rvert \leq R^{\delta}, \lvert \lambda_2 \rvert \leq R^{1/2 + \delta}, \lvert \lambda_3 \rvert \leq R^{1+\delta} \right\}. \]
If $2 \leq p \leq 6$, and 
\[ \mathbb{W} \subseteq \bigcup_{\theta \in \Theta_R} \mathbb{T}_{\tau(\theta)}, \]
and 
\[ \sum_{T \in \mathbb{W} } f_T \]
is such that $\lVert f_T \rVert_p$ is constant over $T \in \mathbb{W}$ up to a factor of 2, with $\supp \widehat{f_T} \subseteq \tau(T)$ and 
\[ \lVert f_T\rVert_{L^{\infty}(B(0,R) \setminus T) } \leq A R^{-10000} \lVert f_T \rVert_p, \]
and $Y$ is a disjoint union of balls in $B_3(0,R)$ of radius 1, such that each ball $Q \subseteq Y$ intersects at most $M$ planks $2T$ with $T \in \mathbb{W}$, then 
\[ \left\lVert \sum_{T \in \mathbb{W} } f_T \right\rVert_{L^p(Y) } \leq C_{A, \gamma,c, \epsilon, \delta} R^{\epsilon} \left( \frac{M}{\left\lvert \mathbb{W} \right\rvert } \right)^{\frac{1}{2} - \frac{1}{p} } \left( \sum_{T \in \mathbb{W} } \left\lVert f_T \right\rVert_p^2 \right)^{1/2}. \] \end{theorem}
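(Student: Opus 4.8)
The plan is to prove the inequality by induction on the scale $R$, following the scheme Guth, Iosevich, Ou and Wang used for the parabola in \cite{GIOW}, with Lorentz rescaling for the light cone playing the role of parabolic rescaling; this is the argument carried out in \cite[Theorem~9]{GGGHMW}, so only an outline is given. First I would reduce to the endpoint $p = 6$: for $2 \le p < 6$ the estimate follows by interpolating the $p = 6$ bound against the trivial case $p = 2$, in which the inequality is just the almost-orthogonality bound
\[ \Bigl\lVert \sum_{T \in \mathbb{W}} f_T \Bigr\rVert_{L^2(Y)} \lesssim \Bigl( \sum_{T \in \mathbb{W}} \lVert f_T \rVert_2^2 \Bigr)^{1/2} \]
coming from the bounded overlap of the Fourier supports $\tau(T)$ (note that $(M/\lvert \mathbb{W}\rvert)^{1/2 - 1/p} = 1$ when $p = 2$), the interpolation being carried out by H\"older's inequality on $Y$ together with the hypothesis that $\lVert f_T\rVert_p$ is constant over $T \in \mathbb{W}$ up to a factor of $2$. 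At the same time one performs the usual dyadic pigeonholing so that every ball $Q \subseteq Y$ meets $\sim M$ (rather than merely $\le M$) planks of $\mathbb{W}$; the resulting $\log R$ losses are harmless.

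The induction is on $R$, with trivial base case $R \le C(\gamma)$ since there are then only $O(1)$ caps. For the inductive step I would fix $R$, set the intermediate scale $\rho := R^{1/2}$, and group the $cR^{-1/2}$-caps $\tau(\theta)$ according to the $c'\rho^{-1/2} = c' R^{-1/4}$-arc of $I$ containing $\theta$; write $\sigma$ for the associated sub-sectors of the cone (of dimensions $\sim 1 \times R^{-1/4} \times R^{-1/2}$), put $\mathbb{W}_\sigma := \{ T \in \mathbb{W} : \tau(T) \subseteq \sigma \}$, and $g_\sigma := \sum_{T \in \mathbb{W}_\sigma} f_T$, which is Fourier-supported in a dilate of $\sigma$. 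Two ingredients are then combined. First, the standard $\ell^2 L^6$ decoupling theorem of Bourgain and Demeter for the light cone, applied at scale $\rho$, passes from $\sum_\sigma g_\sigma$ to the individual $g_\sigma$; since the $\sigma$ sit at the cap scale $\rho^{-1/2}$ this costs only $R^{\epsilon/100}$, which is negligible. Second, for each fixed $\sigma$ one applies the Lorentz rescaling $L_\sigma$ mapping $\sigma$ onto a neighbourhood of the full aperture-$\sim 1$ light cone; it sends the $R^{-1/2}$-subcaps $\tau(\theta) \subseteq \sigma$ to $\rho^{-1/2}$-caps, the planks of $\mathbb{W}_\sigma$ to planks adapted to scale $\rho$, and the balls of $Y$ (being smaller than any plank) into a disjoint union of unit balls, each meeting at most $M$ of the rescaled planks, so that the inductive hypothesis at scale $\rho$ bounds $\lVert g_\sigma \rVert_{L^6(Y)}$ by $R^{\epsilon/2}$ times an appropriate power of the rescaled multiplicity times $\bigl( \sum_{T \in \mathbb{W}_\sigma} \lVert f_T \rVert_6^2 \bigr)^{1/2}$. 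Inserting these per-$\sigma$ bounds into the outer decoupling, and performing one further round of pigeonholing over the dyadic values of $\lvert \mathbb{W}_\sigma \rvert$, of $\lVert g_\sigma \rVert_6$, and of the multiplicities, recombines the $\sigma$-sum into the full sum over $\mathbb{W}$ and reproduces the claimed bound at scale $R$, with an extra loss of at most $R^{\epsilon/10}$ relative to the exponents available at scale $\rho$. Running this as a bootstrap --- letting $\epsilon^*$ be the infimum of the exponents for which the inequality is known, and deducing $\epsilon^* \le \tfrac12 \epsilon^* + o(1)$, hence $\epsilon^* = 0$ --- removes the accumulated losses and yields the stated $R^\epsilon$.

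The hard part, and the reason the argument in \cite{GGGHMW} looks technical, is the geometric bookkeeping in the rescaling step: one must track precisely how the spatial set $Y$, the subfamily $\mathbb{W}$, and above all the multiplicity parameter $M$ transform under $L_\sigma$ (the affine symmetries of the cone being less symmetric than those of the parabola), and arrange the two layers of pigeonholing so that the product of the rescaled multiplicity factors coming from the outer decoupling and from the inductive hypothesis reconstitutes exactly $(M/\lvert \mathbb{W}\rvert)^{1/2 - 1/p}$, with no slack --- any loss here being inconsistent with the sharp examples. The remaining ingredients, namely the $p = 2$ orthogonality bound, the interpolation, and the weighted form of the Bourgain--Demeter decoupling theorem, are routine.
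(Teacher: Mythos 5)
The paper does not prove Theorem~\ref{refineddecouplingtheorem}; it imports it from \cite{GGGHMW} and offers only a one-sentence description of the strategy (induction on scales, standard Bourgain--Demeter decoupling, Lorentz rescaling in the role of parabolic rescaling). Your outline is consistent with that description, correctly identifies the main ingredients (reduction to $p=6$ via $L^2$ orthogonality plus interpolation, intermediate scale $\rho=R^{1/2}$, decoupling at the coarse scale, Lorentz rescaling of each sub-sector, dyadic pigeonholing, and a bootstrap to remove the accumulated loss) as well as the genuine technical difficulty of tracking $Y$, $\mathbb{W}$, and $M$ through the rescaling, so it matches the paper's approach to the extent the paper gives one.
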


\end{document}